\documentclass[10pt]{article}
\usepackage{latexsym,amsmath,amsthm,verbatim,ifthen,amssymb}
\usepackage[latin1]{inputenc}
\usepackage[all]{xy}
\usepackage{graphicx}
\usepackage{epsfig}
\newdir{ >}{{}*!/-7pt/\dir{>}}

\setlength{\textwidth}{125mm} \setlength{\textheight}{185mm}

\newtheorem{theorem}{Theorem}
\newtheorem{corollary}[theorem]{Corollary}
\newtheorem{lemma}[theorem]{Lemma}
\newtheorem{proposition}[theorem]{Proposition}

\newtheorem{definition}[theorem]{Definition}
\newtheorem{remark}[theorem]{Remark}
\newtheorem{conjecture}{Conjecture}

\def\nil{{\rm{nil}\hskip1pt}}
\def\cat{{\rm{cat}\hskip1pt}}

\def\secat{{\rm{secat}\hskip1pt}}
\def\msecat{{\rm{m}_B\rm{secat}\hskip1pt}}
\def\mrelcat{{\rm{m}_B\rm{relcat}\hskip1pt}}
\def\relcat{{\rm{relcat}\hskip1pt}}

\def\TC{{\rm{TC}\hskip1pt}}

\def\cdga{{\textbf{cdga}}}

\def\Q{{\mathbb{Q}}}


\begin{document}

\title{Relative category and monoidal topological complexity
\footnotetext{This work has been supported by the RNP-Network
Program ``Applied and Computational Algebraic Topology" of the
European Science Foundation. Other support has been provided by
FEDER through the Ministerio de Educaci\'on y Ciencia projects
MTM2009-12081, MTM2010-18089 and ``Programa Operacional Factores
de Competitividade - COMPETE'',
 and by FCT -\emph{Funda\c{c}\~{a}o para a Ci\^{e}ncia e a Tecnologia} through
 projects Est-C/MAT/UI0013/2011 and PTDC/MAT/0938317/2008.
} }\author{J.G. Carrasquel-Vera\footnote{Institut de Recherche en Math\'ematique et Physique, Universit\'e Catholique de Louvain, 2 Chemin du Cyclotron, 1348 Louvain-la-Neuve, Belgium.
E-mail: \texttt{jose.carrasquel@uclouvain.be}} , J.M. Garc\'{\i}a
Calcines\footnote{Universidad de La Laguna, Facultad de
Matem\'aticas, Departamento de Matem\'atica
 Fundamental, 38271 La Laguna, Spain. E-mail:
 \texttt{jmgarcal@ull.es}}
\,and L. Vandembroucq\footnote{Centro
 de Matem\'{a}tica, Universidade do Minho, Campus de Gualtar,
 4710-057 Braga, Portugal. E-mail: \texttt{lucile@math.uminho.pt}}
}

\date{\empty}

\maketitle

\begin{abstract}
If a map $f$ has a homotopy retraction, then Doeraene and El
Haouari conjectured that the sectional category and the relative
category of $f$ are the same. In this work we discuss this
conjecture for some lower bounds of these invariants. In
particular, when we consider the diagonal map, we obtain results
supporting Iwase-Sakai's conjecture which asserts that the
topological complexity is the monoidal topological complexity.
\end{abstract}

 \vspace{0.5cm}
 \noindent{2010 \textit{Mathematics Subject Classification} : 55M30, 55P62.}\\
 \noindent{\textit{Keywords}: Sectional category, topological complexity,
 monoidal topological complexity.}
 \vspace{0.2cm}

\section*{Introduction.}

The topological complexity, which can be defined as the sectional
category of the diagonal map, is a numerical homotopy invariant
introduced by Farber in \cite{Far} for the study of the motion
planning problem in robotics. Iwase and Sakai (\cite{IS},
\cite{IwaseSakaiConjecture}) introduced a relative version, called
monoidal topological complexity, and conjectured that both notions
are the same. First results supporting Iwase-Sakai's conjecture
were given by Dranishnikov \cite{Dranishnikov} but it still
remains as an open problem.

On the other hand, Doeraene and El Haouari introduced an
approximation of the sectional category called \emph{relative
category} and proved that the difference between these two
invariants is at most one. Then they ask in
\cite{DoeraeneElHaouariConjecture} for which cases the sectional
category and the relative category agree. In such cases it would
give important information about the map under consideration. As
they have checked, in general these two invariants do not agree.
However Doeraene and El Haouari conjectured that the equality
holds as long as the map has a homotopy retraction.

In this paper we first show that Iwase-Sakai's conjecture is
actually included in Doeraene-El Haouari's conjecture. Then we
establish  analogues versions of this conjecture for several
approximations of both the sectional and relative categories.

\section{Preliminary notions and results.}

\subsection{Sectional category and topological complexity.}\label{joins}

The \emph{sectional category} of a map $f:Y\rightarrow X,$
\secat$(f)$, is the least integer $n$ (or $\infty$) such that $X$
can be covered by $n+1$ open subsets, over each of which $f$
admits a homotopy section. When $f$ is a fibration, then we can
take local strict sections, recovering the usual notion of
sectional category, or Schwarz genus \cite{Sch}, for fibrations.
The \emph{topological complexity}, $\TC(X)$, of a space $X$ in the
sense of Farber \cite{Far} is the sectional category of the path
fibration $\pi :X^I\rightarrow X\times X$, $\alpha \mapsto (\alpha
(0),\alpha (1))$. Here $X^I$ denotes the function space of all
paths $\alpha :I\rightarrow X$, provided with the compact-open
topology. Another important particular case is $\cat(X),$ the
\emph{Lusternik-Schnirelmann category} of a (pointed) space $X.$
If $PX=\{\alpha \in X^I:\alpha (0)=*\}$ is the path space of $X$
and $ev:PX\to X,$ $\alpha \mapsto \alpha (1)$ denotes
corresponding fibration, evaluation at $1$, then one has that
$\cat(X)=\secat(ev).$

The sectional category, which is a homotopy numerical invariant,
can be characterized through the iterated join of $f:Y\rightarrow
X$. Recall that, given any pair of maps
$A\stackrel{\alpha}{\longrightarrow}C\stackrel{\beta}{\longleftarrow
}B$, the \textit{join of} $\alpha$ \textit{and} $\beta$, $A*_C
B\to C$, is obtained by taking the homotopy pushout of the
homotopy pullback of $\alpha$ and $\beta,$
$$\xymatrix@C=0.7cm@R=0.7cm{ {\bullet } \ar[rr] \ar[dd] & & {B} \ar[dl] \ar[dd]^{\beta} \\
 & {A*_C B} \ar@{.>}[dr] & \\ {A} \ar[ur] \ar[rr]_{\alpha} & & {C.} }$$
 Setting  $j^0_f=f:Y\rightarrow X$ and $*^0_X Y=Y$ we can define inductively
$j^n_f:*^n_X Y\rightarrow X$ as the join of $j^{n-1}_f:*^{n-1}_X
Y\rightarrow X$ and $f:Y\rightarrow X.$ We point out that here we
are using $*^n$ to denote the join of $n+1$ copies of the
considered object. The characterization of sectional category is
then given by the following classical result, see for instance
\cite{J} or \cite{Sch}.

\begin{theorem}{\rm \label{char-sect}
Let $f:Y\rightarrow X$ be a map. If $X$ is a paracompact space,
then one has \secat$(f)\leq n$ if and only if $j^n_f:*^n_X
Y\rightarrow X$ admits a homotopy section.}
\end{theorem}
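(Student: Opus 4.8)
The plan is to reduce the statement to the classical case of a fibration and then to exploit the paracompactness hypothesis through a partition of unity argument. Since $\secat$ is a homotopy invariant and the iterated join $j^n_f$ is built entirely out of homotopy pullbacks and homotopy pushouts, both conditions in the statement depend only on the homotopy class of $f$. Hence I would first factor $f$ as a homotopy equivalence followed by a fibration $p\colon E\rightarrow X$, so that $*^n_X Y$ is identified, up to homotopy over $X$, with the ordinary fibrewise join of $n+1$ copies of $p$, and $j^n_f$ with the associated fibrewise join projection. For a fibration the homotopy lifting property lets me rectify any homotopy section over an open set into a genuine section, and likewise turns a homotopy section of the join into an honest section; this removes the coherence subtleties and reduces the theorem to the corresponding statement for the fibrewise join of $p$.

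For the direct implication I would assume $\secat(f)\leq n$, so that $X$ is covered by open sets $U_0,\dots,U_n$ over each of which $p$ admits a section $s_i\colon U_i\rightarrow E$. Using paracompactness I choose a partition of unity $\{\rho_i\}_{i=0}^n$ subordinate to this cover, and define a global section of the fibrewise join by the barycentric formula
$$s(x)=\bigl[\rho_0(x)\,s_0(x),\ \dots,\ \rho_n(x)\,s_n(x)\bigr],$$
where a coordinate is simply omitted when its weight vanishes. This is well defined precisely because $\mathrm{supp}\,\rho_i\subseteq U_i$, so $s_i(x)$ is defined whenever $\rho_i(x)>0$, and because all the $s_i(x)$ lie in the single fibre $p^{-1}(x)$ and thus genuinely form a point of the join over $x$. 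Continuity across the boundaries of the supports is guaranteed by the collapsing relations in the join, and composing with the projection recovers the identity, so $s$ is a section.

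Conversely, assuming a section $s$ of the join, I would use the $n+1$ barycentric coordinate functions $t_i\colon *^n_X Y\rightarrow[0,1]$, which satisfy $\sum_i t_i=1$, to define open sets $U_i=\{x\in X: t_i(s(x))>0\}$. Since the coordinates sum to $1$ these cover $X$, and on the locus $t_i>0$ the join carries a well-defined projection $\pi_i$ onto the $i$-th copy of $Y$ (respectively of $E$). The composite $\pi_i\circ s|_{U_i}$ is then a homotopy section of $f$ over $U_i$, because applying $f$ to it agrees with $j^n_f\circ s\simeq \mathrm{id}_X$ restricted to $U_i$. This yields a cover by $n+1$ open sets admitting local homotopy sections, that is $\secat(f)\leq n$.

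The main obstacle I expect is not the combinatorics of the join but the transition between the homotopy-invariant definitions used here and the strict constructions that make the barycentric formula meaningful. Concretely, in the homotopy pullback one must carry along the connecting paths witnessing $f\circ s_i\simeq \mathrm{incl}_{U_i}$ in order for the weighted combination to define a point of the homotopy join, and it is exactly the reduction to a genuine fibration together with paracompactness that lets me replace these coherent homotopies by honest points in a common fibre and thereby obtain a continuous global section.
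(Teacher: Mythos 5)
The paper does not actually prove this theorem: it cites it as classical (James, Schwarz) and only records, immediately afterwards, the reduction to the fibration case via the explicit fibrewise join. Your argument is precisely that classical proof — replace $f$ by a fibration $p$, use homotopy invariance of the iterated join to identify $j^n_f$ with the fibrewise join of $n+1$ copies of $p$, rectify local homotopy sections to strict ones by the homotopy lifting property, and then run the partition-of-unity construction one way and the barycentric-coordinate/partial-projection argument the other way. This is correct and is exactly the approach the paper is alluding to, so there is nothing to object to; the only point worth noting is that paracompactness is used only in the forward direction (to get a numerable partition of unity subordinate to the cover), while your converse works for arbitrary $X$, which is consistent with the paper's subsequent remark adopting the join criterion as the definition of $\secat$ in general.
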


This theorem was first proved by Schwarz for a fibration $p:E\to
B,$ in which case the join $j^n_p$ can be constructed to be a
fibration and we may require the fibration $j^n_p$ to have a
strict section instead of a homotopy section. Indeed, if
$\alpha:A\to C$ and $\beta:B\to C$ are fibrations, the join map
$\alpha*_C\beta:A*_CB\to C$ can be explicitely described as
follows $A\ast_C B=A\amalg (A\times_C B\times [0,1]) \amalg B/\sim
\to C$, $\langle a,b,t\rangle  \mapsto  \alpha(a)=\beta(b)$ where
$\sim$ is given by $(a,b,t)\sim a$ if $t=0$ and $(a,b,t)\sim b$ if
$t=1$. This map is a fibration whose fibre is the ordinary join of
the fibres.

\begin{remark}{\rm
In order to avoid the unnecessary technical requirement on the
space $X$ of being paracompact, we will consider the statement in
Theorem \ref{char-sect} as the definition of sectional category of
a map. In particular, if $X$ is any topological space, then taking
$f=\pi:X^ I\to X\times X$ we have $$\TC(X)\leq n \quad
\Leftrightarrow \quad j^n_{\pi}:*^n_{X\times X} X^ I\rightarrow
X\times X \mbox{ admits a (homotopy) section.}$$}
\end{remark}

\subsection{Relative category.}

By construction of the iterated join $j^n_f: *^ n_XY\to X$ we
obtain, for each $n\geq 0$, a homotopy commutative diagram:
\begin{equation}\label{diag-relcat}
\xymatrix{Y\ar[r]^{\iota_n} \ar[rd]_f &  \ast^n_XY \ar[d]^{j^n_f}\\
&X. }
\end{equation}
In particular, for $n=0$, the map $\iota_0:Y\rightarrow Y$ is just
the identity.

\begin{definition}{\rm \cite{DoeraeneElHaouari}\label{defRelcat}
Let $f:Y\rightarrow X$ be a map. The \textit{relative category} of
$f$, denoted by $\relcat(f)$, is the least integer $n$ such that
$j^n_f$ admits a homotopy section $\sigma$ which satisfies $\sigma
f \simeq \iota_n$. }
\end{definition}

It is clear that $\secat(f)\leq \relcat(f)$. Doeraene and El
Haouari proved in \cite{DoeraeneElHaouari} that the difference
between the two invariants is at most 1:
\begin{theorem}{\rm
For any map $f:Y\rightarrow X$ one has $\secat(f)\leq
\relcat(f)\leq \secat(f)+1.$ }
\end{theorem}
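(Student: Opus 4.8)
The plan is to prove the two inequalities $\secat(f)\le\relcat(f)$ and $\relcat(f)\le\secat(f)+1$ separately. The first is immediate and was already noted in the excerpt: if $\relcat(f)\le n$, then by Definition~\ref{defRelcat} the map $j^n_f$ admits a homotopy section $\sigma$ (satisfying the extra condition $\sigma f\simeq\iota_n$, which we simply discard), and the mere existence of a homotopy section of $j^n_f$ gives $\secat(f)\le n$ by the characterization in Theorem~\ref{char-sect}. Hence all the work lies in the second inequality.

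For $\relcat(f)\le\secat(f)+1$, suppose $\secat(f)=n$, so that $j^n_f:\ast^n_X Y\to X$ admits a homotopy section $\sigma$; I must produce a homotopy section $\tau$ of $j^{n+1}_f$ that additionally satisfies $\tau f\simeq\iota_{n+1}$. The key idea is to exploit the recursive structure of the join: $j^{n+1}_f$ is by construction the join of $j^n_f$ and $f$ over $X$, so there are canonical maps from $\ast^n_X Y$ and from $Y$ into $\ast^{n+1}_X Y$ (the two ``ends'' of the mapping-cylinder-like join construction) compatible with $j^{n+1}_f$. I would first use the existing section $\sigma$ of $j^n_f$, composed with the inclusion of the $\ast^n_X Y$-end into $\ast^{n+1}_X Y$, to obtain a homotopy section of $j^{n+1}_f$; the point of passing from level $n$ to level $n+1$ is precisely that the extra join factor of $Y$ supplies the room needed to deform this section so that it agrees with $\iota_{n+1}$ after precomposition with $f$.

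Concretely, the second step is to build the required homotopy $\tau f\simeq\iota_{n+1}$. Here I would use the $Y$-end inclusion of the join: the map $\iota_{n+1}:Y\to\ast^{n+1}_X Y$ factors (up to homotopy) through this $Y$-coordinate, while the candidate section $\tau=\sigma$ followed by the $\ast^n_X Y$-inclusion lands in the other end. In the join $\ast^{n+1}_X Y=\ast^n_X Y\ast_X Y$, any point coming from $\ast^n_X Y$ lying over $f(y)$ and the point coming from $y\in Y$ lying over $f(y)$ are joined by the canonical join segment, and both project to $f(y)$ under $j^{n+1}_f$. Sliding along these segments, parametrized by the join coordinate $t\in[0,1]$, yields a homotopy from $\tau f$ to $\iota_{n+1}$ that stays over $X$ via $j^{n+1}_f$, which is exactly the defining condition of $\relcat$. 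This gives $\relcat(f)\le n+1=\secat(f)+1$.

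The main obstacle will be the last step: making the homotopy $\tau f\simeq\iota_{n+1}$ precise and compatible with the homotopy-section condition simultaneously. One must keep track of the fact that $\sigma$ is only a \emph{homotopy} section (so $j^n_f\sigma\simeq\mathrm{id}_X$ rather than an equality) and ensure the join-segment homotopy respects these choices, so that the resulting $\tau$ is genuinely a homotopy section of $j^{n+1}_f$ and the homotopy $\tau f\simeq\iota_{n+1}$ can be realized over $X$ without obstruction. The cleanest way to handle this is to work in the fibration model described after Theorem~\ref{char-sect}, where $j^n_f$ and $j^{n+1}_f$ can be taken to be fibrations with the explicit join description $A\ast_C B=A\amalg(A\times_C B\times[0,1])\amalg B/\!\sim$; the join segment then becomes the literal $[0,1]$-coordinate, and the lifting properties of the fibration let one promote homotopies to strict sections and control everything over $X$.
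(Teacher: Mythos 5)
Your proof is correct; note that the paper does not prove this theorem itself but quotes it from Doeraene--El Haouari, so there is no internal proof to compare against. Your argument --- discarding the condition $\sigma f\simeq\iota_n$ for the first inequality, and, for the second, straightening $\sigma$ to a strict section in the fibration model, including it into the $\ast^n_X Y$-end of $\ast^{n+1}_X Y=\ast^n_X Y\ast_X Y$, and sliding along the join segments $t\mapsto\langle\sigma f(y),y,t\rangle$ (which stay over $f(y)$) to reach $\iota_{n+1}$ --- is the standard one and is essentially the proof in the cited reference, with the one genuine subtlety (that the join segment only exists once $\sigma$ is a strict section, so that $(\sigma f(y),y)$ lies in the pullback) correctly identified and resolved.
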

\noindent They also set in \cite{DoeraeneElHaouariConjecture} the
following conjecture, that we will refer to as the \emph{D-EH
conjecture}:

\begin{conjecture}{\rm (D-EH Conjecture)
Let $f:Y\rightarrow X$ be any map. If $f:Y\rightarrow X$ admits a
homotopy retraction, then $\secat(f)= \relcat(f)$.}
\end{conjecture}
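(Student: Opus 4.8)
The plan is to prove the nontrivial inequality $\relcat(f)\le\secat(f)$, the reverse inequality being automatic. Set $n=\secat(f)$ and fix a homotopy section $\sigma$ of $j^n_f$, so that $j^n_f\sigma\simeq \mathrm{id}_X$; the goal is to replace $\sigma$ by a section $\sigma'$ that additionally satisfies $\sigma' f\simeq\iota_n$. Since $\secat$ and $\relcat$ are homotopy invariants, I would first put the data in a convenient rigid model: replace $f$ by a cofibration via its mapping cylinder and, using the homotopy retraction $r$ together with the homotopy extension property, deform $r$ to a strict retraction with $rf=\mathrm{id}_Y$. Dually, using the explicit construction recalled after Theorem~\ref{char-sect}, I would model $j^n_f$ by a fibration $q\colon E\to X$ whose fibre is the join $F^{*(n+1)}$ of $n+1$ copies of the fibre $F$ of $f$, so that homotopy sections may be taken to be strict sections and the relative condition can be analysed by obstruction theory along the fibres of $q$.

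The core of the argument is a correction step. Both $\sigma f$ and $\iota_n$ are homotopy lifts of $f$ through $j^n_f$: indeed $j^n_f\sigma f\simeq f$ because $\sigma$ is a section, while $j^n_f\iota_n\simeq f$ by diagram~\eqref{diag-relcat}. The idea is to choose a homotopy $K\colon \sigma f\simeq\iota_n$ in $*^n_XY$ and, exploiting that $f$ is now a cofibration, to extend $K$ to a homotopy of $\sigma$ over all of $X$; its end map $\sigma'$ then satisfies $\sigma' f=\iota_n$ on the nose. The map $\sigma'$ is the desired relative section precisely when it remains a section, that is, when the projected homotopy $j^n_f K\colon f\simeq f$ can be chosen to be the constant homotopy at $f$. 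In other words, I must arrange $\sigma f$ and $\iota_n$ to be \emph{vertically} homotopic over $X$, by a homotopy whose difference class lives in the fibre $F^{*(n+1)}$ of $q$.

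Here the retraction enters decisively, and here lies the main obstacle. The two homotopies $j^n_f\sigma\simeq\mathrm{id}_X$ and $j^n_f\iota_n\simeq f$ need not agree after restriction along $f$, and their discrepancy is exactly the obstruction to verticalising $K$; it determines a class in the set of vertical homotopy classes $[Y,F^{*(n+1)}]$ along the fibres of $q$. The plan is to use the strict retraction $r$ to transport the global section homotopy back along $r$ — replacing $\sigma$ by a section built from the pair $(\sigma,\iota_n\circ r)$ — so that on $f(Y)$ the transported homotopy coincides with the one coming from diagram~\eqref{diag-relcat}, thereby trivialising the obstruction class; the connectivity of the join fibre $F^{*(n+1)}$ controls the range in which this class can be nonzero, and the retraction is meant to supply the splitting that kills it. The genuinely hard point — and the reason the statement is delicate rather than routine — is to show that this splitting annihilates the obstruction \emph{coherently}, compatibly with all the intervening higher homotopies, rather than merely bounding its range; establishing that single vanishing is where the weight of the argument rests.
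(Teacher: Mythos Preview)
The statement you are attempting to prove is a \emph{conjecture}: the paper does not prove it, and it is explicitly presented as an open problem. There is therefore no proof in the paper to compare your proposal against; the paper only establishes approximations --- a stable version (Theorem~\ref{sigma-version}), a weak Berstein--Hilton version, and rational versions --- none of which settles the conjecture itself.

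Your proposal is not a proof but a strategy, and you yourself identify the gap: the entire argument hinges on showing that the obstruction class to verticalising the homotopy $\sigma f\simeq\iota_n$ vanishes, and no mechanism is supplied for this beyond the hope that the retraction ``kills it coherently''. Concretely, the suggestion of building a new section from the pair $(\sigma,\iota_n\circ r)$ does not make sense unstably: $\iota_n\circ r$ is not a section of $j^n_f$ (since $j^n_f\iota_n r\simeq fr$, not $\mathrm{id}_X$), and there is no additive structure on maps $X\to *^n_XY$ that would let you combine it with $\sigma$. This is precisely the obstacle the paper circumvents \emph{after suspension} in Theorem~\ref{sigma-version}: once everything is suspended, the co-H-structure allows one to set $s'=s\sigma\Sigma^i\lambda+\Sigma^i\iota_n\Sigma^ir$, and the splitting of Lemma~\ref{splitting} then gives both $\Sigma^ij^n_f\,s'\simeq\mathrm{id}$ and $s'\Sigma^if\simeq\Sigma^i\iota_n$. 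Your outline is, in effect, an attempt to run that same additive correction before suspending, and the absence of a co-H-structure on $X$ is exactly why it stalls. The connectivity of the join fibre $F^{*(n+1)}$ bounds the obstruction range but does not, on its own, force vanishing without dimension hypotheses of the kind used by Dranishnikov in Theorem~\ref{Dran-IS}(i).
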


\begin{remark}{\rm
The hypothesis of the existence of a homotopy retraction cannot be
relaxed since, as pointed out in \cite{DoeraeneElHaouari}, for the
Hopf map $f:S^ 3\to S^ 2$, we have $\secat(f)=1$ while
$\relcat(f)=2$. Another example is the inclusion
$f:S^1\hookrightarrow D^2,$ for which $\secat(f)=0$ and
$\relcat(f)=1$ (observe that, in general, $\relcat(f)=0$ if and
only $f$ is a homotopy equivalence).}
\end{remark}

\subsection{Monoidal topological complexity.}
If $X$ is a topological space, we denote by $\Delta:X\to X \times
X,$ $x\mapsto (x,x)$, the diagonal map and by $s_0:X\to X^I$ the
homotopy equivalence that associates with $x\in X$ the constant
path $\hat{x}$ in $x$, then we obviously have $\pi s_0=\Delta .$

An important variant of the topological complexity is the
\emph{monoidal topological complexity}, which was introduced by
Iwase and Sakai in \cite{IS}.

\begin{definition}{\rm \cite{IS}
The \textit{monoidal topological complexity} of $X$, $\TC^M(X)$,
is the least integer $n$ such that $X\times X$ can be covered by
$n+1$ open sets $U_i\supseteq \Delta(X)$ over each of which there
exists a section $s_i$ of $\pi:X^I\to X\times X$ which satisfies
$s_i\Delta=s_0$. }
\end{definition}

Again it is clear that $\TC(X)\leq \TC^M(X)$ and Iwase-Sakai
proved that the difference between the two numbers is at most $1$:

\begin{theorem}{\rm \cite{IwaseSakaiConjecture}
For any locally finite simplicial complex  (or more generally, any
Euclidean Neighborhood Retract) $X$, one has $\TC(X)\leq
\TC^M(X)\leq \TC(X)+1.$}
\end{theorem}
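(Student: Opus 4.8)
The inequality $\TC(X)\le\TC^M(X)$ is immediate from the definitions, since a monoidal covering of $X\times X$ is in particular a covering by open sets admitting local sections of $\pi$, so it computes an upper bound for $\secat(\pi)=\TC(X)$. Thus the whole content is the inequality $\TC^M(X)\le\TC(X)+1$, and my plan is to manufacture a monoidal covering with $n+2$ members out of an ordinary sectional covering with $n+1$ members, where $n=\TC(X)$. Concretely, I would start from open sets $U_0,\dots,U_n$ covering $X\times X$, each equipped with a section $s_i\colon U_i\to X^I$ of $\pi$, and produce open sets $V_0,\dots,V_n,V_{n+1}$, all containing $\Delta(X)$ and each carrying a section agreeing with $s_0$ on $\Delta(X)$. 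The extra member $V_{n+1}$ is precisely what accounts for the ``$+1$''.

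The first step, and the place where the hypothesis that $X$ is an ENR is indispensable, is to build a canonical section near the diagonal. Embedding $X$ in some $\R^k$ and choosing a retraction $r\colon O\to X$ of an open neighbourhood $O$ of $X$, I would set $N=\{(x,y)\in X\times X : (1-t)x+ty\in O \text{ for all } t\in[0,1]\}$ and define $\sigma\colon N\to X^I$ by $\sigma(x,y)(t)=r\big((1-t)x+ty\big)$. A compactness argument shows that $N$ is an open neighbourhood of $\Delta(X)$, and $\sigma$ is a section of $\pi$ which, on the diagonal, returns the constant path: $\sigma(x,x)=\hat x$, i.e. $\sigma\Delta=s_0$. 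This gives the distinguished member $V_{n+1}:=N$ of the monoidal covering.

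For the remaining members I would deliberately avoid trying to interpolate between $s_i$ and $\sigma$ on their overlap---this is the naive approach, and it carries a genuine obstruction, since $s_i(x,y)$ and $\sigma(x,y)$ need not be homotopic rel endpoints in a continuously chosen way. Instead, using that $\Delta(X)$ is closed and $X\times X$ is normal, I would shrink $N$ to an open set $N'$ with $\Delta(X)\subseteq N'\subseteq\overline{N'}\subseteq N$, and set $V_i=(U_i\setminus\overline{N'})\cup N'$ for $0\le i\le n$. Each $V_i$ is open, contains $\Delta(X)$, and is the disjoint union of the two open sets $U_i\setminus\overline{N'}$ and $N'$; since these are then clopen in $V_i$, the map equal to $s_i$ on the first and to $\sigma$ on the second is a continuous section of $\pi$. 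As the diagonal sits inside $N'$, this section equals $s_0$ on $\Delta(X)$.

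Finally I would check that $\{V_0,\dots,V_n,N\}$ covers $X\times X$: any point in $N$ is caught by the last member, and any point outside $N$ lies outside $\overline{N'}$ and hence in some $U_i\setminus\overline{N'}\subseteq V_i$. This exhibits a monoidal covering with $n+2$ sets, giving $\TC^M(X)\le n+1=\TC(X)+1$. The conceptual heart---and the step I expect to require the most care---is the disjointness trick: by excising the closed collar $\overline{N'}$ from each $U_i$ and covering that collar separately with $N$, one sidesteps the interpolation obstruction at the cost of exactly one additional open set, which is what the theorem predicts.
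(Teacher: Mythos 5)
Your argument is correct. Note, however, that the paper does not prove this statement at all: it is quoted from Iwase--Sakai's erratum \cite{IwaseSakaiConjecture}, so there is no internal proof to compare against. Your proof is essentially the standard one from the literature. The two key points are both handled properly: (i) the ENR hypothesis is used exactly where it must be, namely to produce the open neighbourhood $N$ of $\Delta(X)$ carrying a section $\sigma$ with $\sigma\Delta=s_0$ (the tube-lemma argument for openness of $N$ and the exponential-law argument for continuity of $\sigma$ are both fine, and metrizability of $X\times X$ gives the normality needed to shrink $N$ to $N'$ with $\overline{N'}\subseteq N$); and (ii) the disjointification $V_i=(U_i\setminus\overline{N'})\cup N'$ correctly sidesteps the interpolation obstruction, since the two pieces are disjoint open (hence clopen in $V_i$) sets, so the piecewise section is continuous and restricts to $s_0$ on the diagonal because $\Delta(X)\subseteq N'$. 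The covering check at the end is also right. The only cosmetic caveat is that the inequality is vacuous when $\TC(X)=\infty$, and one should say a word about why the open-cover definitions used here agree with the join-theoretic ones the paper later adopts; for ENRs (which are paracompact) this is Theorem \ref{char-sect} and Theorem \ref{Dran}, so nothing is lost.
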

Iwase and Sakai also conjectured in \cite{IwaseSakaiConjecture}
that the monoidal topological complexity coincides with the
classical topological complexity. Their conjecture will be
referred to as the \emph{I-S conjecture}

\begin{conjecture}{\rm (I-S Conjecture)
For any locally finite simplicial complex $X$, one has $\TC(X)=
\TC^M(X)$.}
\end{conjecture}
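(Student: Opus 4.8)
The plan is to deduce the I-S Conjecture from the D-EH Conjecture by recognising $\TC(X)$ and $\TC^M(X)$ as the sectional and the relative category, respectively, of one and the same map, namely the path fibration $\pi\colon X^I\to X\times X$, and then invoking the fact that $\pi$ admits a homotopy retraction. Since $\TC(X)=\secat(\pi)$ is the definition, the first identification is free. I record the elementary facts about $\pi$ used below: writing $\mathrm{ev}_0\colon X^I\to X$ for evaluation at $0$, one has $\mathrm{ev}_0\,s_0=\mathrm{id}_X$ and $s_0\,\mathrm{ev}_0\simeq\mathrm{id}_{X^I}$ (reparametrising each path towards its initial point), so $s_0$ is a homotopy equivalence with $\pi s_0=\Delta$. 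Taking $r=s_0\,\mathrm{pr}_1\colon X\times X\to X^I$ gives $r\pi\simeq\mathrm{id}_{X^I}$, that is, $\pi$ admits a homotopy retraction.

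The heart of the argument is to prove $\TC^M(X)=\relcat(\pi)$. Given an open cover $U_0,\dots,U_n$ of $X\times X$ with each $U_i\supseteq\Delta(X)$ and sections $s_i$ of $\pi$ over $U_i$ satisfying $s_i\Delta=s_0$, I would run the classical Schwarz assembly underlying Theorem \ref{char-sect} to glue the $s_i$ into a single homotopy section $\sigma$ of the $n$-th join $j^n_\pi\colon\ast^n_{X\times X}X^I\to X\times X$, keeping track of the common behaviour over the diagonal. Since every $s_i$ restricts to $s_0$ on $\Delta(X)$, the glued section satisfies $\sigma\Delta\simeq\iota_n s_0$; composing with $\mathrm{ev}_0$ and using $s_0\,\mathrm{ev}_0\simeq\mathrm{id}$, this is equivalent to the relative condition $\sigma\pi\simeq\iota_n$, giving $\relcat(\pi)\leq\TC^M(X)$. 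The reverse inequality reverses the construction: a homotopy section $\sigma$ of $j^n_\pi$ with $\sigma\pi\simeq\iota_n$ yields, over the preimages of the standard open cover of the join, local sections of $\pi$, and the relative condition is used precisely to deform these so that they agree with $s_0$ on a neighbourhood of the diagonal, producing open sets $U_i\supseteq\Delta(X)$ with $s_i\Delta=s_0$; hence $\TC^M(X)\leq\relcat(\pi)$.

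It then remains only to combine. Because $\pi$ admits a homotopy retraction, the D-EH Conjecture applied to $f=\pi$ gives $\secat(\pi)=\relcat(\pi)$; with the two identifications above this reads $\TC(X)=\TC^M(X)$, which is exactly the I-S Conjecture.

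The obstacle is twofold. Honestly, the statement is open and the argument above does not settle it unconditionally: it only reduces I-S to D-EH, so the first obstacle is the D-EH Conjecture itself. The genuinely technical point within reach is the identification $\TC^M(X)=\relcat(\pi)$ of the middle step, and more precisely the matching of the strict, geometric monoidal condition $s_i\Delta=s_0$ (equality on the nose, over open sets containing $\Delta(X)$) with the purely homotopical relative condition $\sigma\pi\simeq\iota_n$. Passing from local strict sections to a global homotopy section, and back, must be carried out without destroying the compatibility over $\Delta(X)$; the deformation that recovers $s_i\Delta=s_0$ from $\sigma\pi\simeq\iota_n$ is where the subtleties of the original Iwase-Sakai definition (neighbourhoods of the diagonal, the based constraint) intervene, and is the step I expect to require the most care.
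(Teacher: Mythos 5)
You do not prove the statement, and neither does the paper: the I-S Conjecture is open, and the paper's contribution is exactly the reduction you describe, namely its Corollary that the D-EH conjecture contains the I-S conjecture, obtained from the Theorem $\TC^M(X)=\relcat(\Delta)$ together with the observation that $\Delta$ has the homotopy retraction $p_2$. So your overall strategy coincides with the paper's (your $\relcat(\pi)$ equals the paper's $\relcat(\Delta)$ since $s_0$ is a homotopy equivalence with $\pi s_0=\Delta$), and you are right to flag that the result is conditional on D-EH.

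The genuine gap is in the step you yourself single out, the identification $\TC^M(X)=\relcat(\pi)$, and it is not merely ``care required'': as sketched, the deformation producing strict sections with $s_i\Delta=s_0$ from the homotopical condition $\sigma\Delta\simeq s_n$ has no justification and in fact needs a hypothesis on $X$ that you never state. The paper supplies two specific ingredients. First, Dranishnikov's characterization (Theorem \ref{Dran}): $\TC^M(X)\leq n$ if and only if the join fibration $j^n_\pi$ admits a \emph{strict} section $\sigma$ with $\sigma\Delta=s_n$; this absorbs, in both directions, all of the open-cover/Schwarz gluing you propose to redo by hand, including the bookkeeping over the diagonal. Second, to pass from a homotopy section with $\sigma\Delta\simeq s_n$ to a strict one with $\sigma'\Delta=s_n$, the paper uses Harper's lemma to upgrade $\sigma\Delta\simeq s_n$ to a \emph{fibrewise} homotopy over $X\times X$, and then the hypothesis that $X$ is locally equiconnected (so that $\Delta$ is a closed cofibration, which holds for CW-complexes and hence for locally finite simplicial complexes) to extend that homotopy over $X\times X\times I$ against the fibration $j^n_\pi$ and replace $\sigma$ by a section $\sigma'$ satisfying $\sigma'\Delta=s_n$ on the nose. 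Without the cofibration hypothesis and the fibrewise upgrading, the ``deform so that they agree with $s_0$ on a neighbourhood of the diagonal'' step does not go through; this is precisely where your argument is incomplete.
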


\noindent In \cite{Dranishnikov}, A. Dranishnikov shows that the
equality holds under certain restrictions on the space $X$:

\begin{theorem}{\rm \label{Dran-IS}\cite{Dranishnikov}
If $X$ is a space, then the equality $\TC(X)=\TC^M(X)$ holds in
the following cases: \begin{enumerate} \item[(i)] $X$ is a
$(q-1)$-connected simplicial complex and $\dim(X)\leq
q(\TC(X)+1)-2;$
\item[(ii)] $X$ is a connected Lie group.
\end{enumerate}}
\end{theorem}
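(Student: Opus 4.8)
The plan is to use the established bounds $\TC(X) \leq \TC^M(X) \leq \TC(X)+1$, so that proving equality amounts to ruling out the extra unit: writing $n = \TC(X)$, one must upgrade an ordinary sectional cover to a \emph{monoidal} one with the same number $n+1$ of open sets. Following the join reformulation of Theorem~\ref{char-sect}, this is best phrased through the iterated join fibration $j^n_\pi : *^n_{X\times X} X^I \to X\times X$: since $\TC(X)=n$, this map admits a section $\sigma$, and the monoidal condition requires in addition that $\sigma$ be compatible, along the diagonal, with the canonical constant-path lift induced by $s_0$. The whole problem thus reduces to deforming $\sigma$, over a neighbourhood of $\Delta(X)$, into a section that restricts to the preferred lift on $\Delta(X)\cong X$; equivalently, to arranging $\sigma s_0 \simeq \iota_n$ in the sense of relative category.

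For case (i) I would run obstruction theory against the fibre of $j^n_\pi$. The fibre of $\pi$ is $\Omega X$, which is $(q-2)$-connected since $X$ is $(q-1)$-connected; the fibre of $j^n_\pi$ is the join of $n+1$ copies of $\Omega X$, and using the connectivity rule $\mathrm{conn}(A*B)=\mathrm{conn}(A)+\mathrm{conn}(B)+2$ one finds this join to be $\big((n+1)q-2\big)$-connected. The compatibility of $\sigma$ with the constant lift is a section-deformation problem over $\Delta(X)\cong X$, whose dimension is at most $q(\TC(X)+1)-2=(n+1)q-2$ by hypothesis. The successive obstructions live in $H^{k}\big(X;\pi_k(F)\big)$ with $F$ the $\big((n+1)q-2\big)$-connected join fibre, so they vanish in every degree permitted by the dimension bound. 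Hence $\sigma$ can be homotoped to a section realising the monoidal condition, giving $\TC^M(X)\leq n$ and therefore equality.

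For case (ii) I would exploit the homogeneity of a connected Lie group $G$. The translation map $(a,b)\mapsto a^{-1}b$ carries $\Delta(G)$ to the identity $e$ and identifies the sectional-category problem for $\pi$ with the Lusternik--Schnirelmann problem for $G$ based at $e$; in particular it recovers $\TC(G)=\cat(G)$. Under this identification the monoidal requirement that each local section agree with the constant path along the diagonal translates into the \emph{pointed} requirement that the corresponding categorical cover contract rel the basepoint $e$. Because $G$ is a well-pointed absolute neighbourhood retract, any categorical open cover can be refined to one in which every contraction fixes $e$, so the pointed refinement exists with the same number of open sets; translating back yields $\TC^M(G)\leq\cat(G)=\TC(G)$.

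The main obstacle I anticipate lies in case (i): the argument must control not merely the \emph{existence} of a section of $j^n_\pi$ but its \emph{homotopy class relative to $\Delta(X)$}, so the obstruction calculus has to be carried out in the relative (deformation) setting, where an off-by-one in the indexing of the groups $H^{k}(X;\pi_k(F))$ would destroy the bound. Making $j^n_\pi$ into an honest fibration with fibre the iterated join, and checking that the preferred constant-path lift is suitably nondegenerate so that the relative obstruction theory applies, are the technical points on which the sharpness of the dimension inequality $\dim(X)\leq q(\TC(X)+1)-2$ ultimately rests.
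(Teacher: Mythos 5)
This theorem is not proved in the paper: it is quoted from Dranishnikov \cite{Dranishnikov}, so there is no internal proof to compare yours against, and your attempt has to be judged against the cited source. On that basis your reconstruction follows Dranishnikov's actual strategy in both cases and is correct in outline. For (i), you correctly reduce, via the characterization in Theorem \ref{Dran}, to deforming a section $\sigma$ of $j^n_\pi$ until it agrees with $s_n$ over the diagonal (the condition should read $\sigma\Delta\simeq s_n$ rather than ``$\sigma s_0\simeq\iota_n$'', which does not typecheck), and your connectivity count for the fibre, the join of $n+1$ copies of the $(q-2)$-connected space $\Omega X$, namely $(n+1)(q-2)+2n=(n+1)q-2$, is the right one; the obstructions to a vertical homotopy between $\sigma|_{\Delta(X)}$ and $s_n$ do lie in $H^k(X;\pi_k(F))$ and vanish under the stated dimension bound. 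The step you leave implicit --- upgrading a vertical homotopy over $\Delta(X)$ to a section that is \emph{strictly} equal to $s_n$ there, as Theorem \ref{Dran} requires --- is precisely what the paper's own theorem on locally equiconnected spaces supplies, using Harper's lemma together with the homotopy extension property of the cofibration $\Delta(X)\hookrightarrow X\times X$; you should make that step explicit rather than fold it into ``deforming over a neighbourhood of the diagonal.''

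For (ii), the translation trick $(a,b)\mapsto a^{-1}b$ is the right one, but the sentence ``any categorical open cover can be refined to one in which every contraction fixes $e$'' is the entire mathematical content of the case: it is the classical coincidence of pointed and unpointed Lusternik--Schnirelmann category for well-pointed normal (e.g.\ ANR) spaces, and it needs a citation or an argument, not a subordinate clause. It is also worth noting that the paper reaches a \emph{stronger} form of (ii) --- equality for all connected CW H-spaces --- by an entirely different route: the identification $\TC^M(X)=\relcat(\Delta)$ combined with Theorem 11 of \cite{DoeraeneElHaouariConjecture}, which bypasses the geometric manipulation of covers altogether. Your direct argument buys concreteness for Lie groups; the paper's buys generality.
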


\section{Monoidal topological complexity is a relative category.}

Part (i) of previous theorem is based on the following
characterization of $\TC^M$ (see Theorem \ref{Dran}). Using the
explicit description of the join for fibrations we can see that
Diagram (\ref{diag-relcat}) can be constructed in a commutative
way. That is, for each $n\geq 0$, there exists a commutative
diagram:
$$\xymatrix{X^I\ar[r]^(.4){\iota_n} \ar[rd]_{\pi} &  \ast^n_{X\times X}X^I \ar[d]^{j^n_\pi}\\
&X\times X. }$$

As the map $s_0:X\to X^I$, $x\mapsto \hat{x},$ satisfies $\pi
s_0=\Delta$, we set $s_n:=\iota_n s_0$ and we have for any $n\geq
0$, $j^n_\pi\, s_n=\Delta$.

\begin{theorem}{\rm \label{Dran}\cite{Dranishnikov}
If $X$ is paracompact, then $\TC^M(X)\leq n$ if and only if the
fibration $j^ n_{\pi}: *^ n_{X\times X}X^I\to X\times X$ admits a
strict section $\sigma $ such that $\sigma \Delta =s_n$.}
\end{theorem}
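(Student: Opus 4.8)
This statement refines the Schwarz characterization of Theorem~\ref{char-sect}: besides producing a strict section of the fibration $j^n_\pi$, one has to control its values along the diagonal. The plan is to run both implications of Schwarz's classical argument inside the explicit model of the join of fibrations recalled above, keeping track at each stage of the single copy of $X^I$ singled out by $\iota_n$. Let $i_0$ denote the index of the vertex into which $\iota_n:X^I\hookrightarrow \ast^n_{X\times X}X^I$ embeds; then over the diagonal $s_n=\iota_n s_0$ takes values in that vertex, so that $s_n(x,x)$ has barycentric coordinates $e_{i_0}$ and fibre-point $\hat{x}$. Recall also the two ingredients of Schwarz's proof: given a subordinate partition of unity one assembles local sections of $\pi$ into a global section of $j^n_\pi$ by barycentric combination; and conversely, writing $W_i=\{\,t_i>0\,\}\subseteq\ast^n_{X\times X}X^I$ for the open set where the $i$-th barycentric coordinate is positive, the maps $\rho_i:W_i\to X^I$ extracting the $i$-th fibre-point satisfy $\pi\rho_i=j^n_\pi|_{W_i}$.

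For the implication $\TC^M(X)\le n \Rightarrow$ (section) I would start from a cover $\{U_i\}_{i=0}^n$ with $U_i\supseteq\Delta(X)$ and local sections $s_i$ of $\pi$ with $s_i\Delta=s_0$. The only new point is to choose the partition of unity so that it concentrates at the vertex $i_0$ along the diagonal. Using normality of the paracompact space $X\times X$, pick an open $V$ with $\Delta(X)\subseteq V\subseteq\overline V\subseteq U_{i_0}$ and a Urysohn function $\lambda$ equal to $1$ on $\Delta(X)$ with $\mathrm{supp}\,\lambda\subseteq U_{i_0}$; starting from any partition of unity $\{\psi_i\}$ subordinate to $\{U_i\}$, set $\phi_{i_0}=\lambda+(1-\lambda)\psi_{i_0}$ and $\phi_i=(1-\lambda)\psi_i$ for $i\ne i_0$. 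This is again subordinate to $\{U_i\}$ and satisfies $\phi_{i_0}\equiv1$, $\phi_{i}\equiv0$ $(i\ne i_0)$ on $\Delta(X)$. The barycentric section $\sigma$ built from $\{\phi_i\}$ and $\{s_i\}$ is then a strict section of $j^n_\pi$, and over the diagonal it collapses to the vertex $i_0$ with fibre-point $s_{i_0}(x,x)=\hat{x}$, i.e. to $\iota_{i_0}(\hat{x})=\iota_n(\hat{x})=s_n(x,x)$. Hence $\sigma\Delta=s_n$, with no deformation required.

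For the converse I would start from a strict section $\sigma$ with $\sigma\Delta=s_n$. Here the naive Schwarz cover fails the monoidal requirement: since $\sigma(x,x)=\iota_{i_0}(\hat{x})$ lies only in $W_{i_0}$, the set $(\sigma)^{-1}(W_{i_0})$ contains $\Delta(X)$ but $(\sigma)^{-1}(W_i)$ misses it for every $i\ne i_0$, whereas the definition of $\TC^M$ demands that all $n+1$ open sets contain the diagonal. The remedy is to spread $\sigma$ out near the diagonal: over $\Delta(X)$ deform $\iota_{i_0}(\hat{x})$, along the straight segment in barycentric coordinates, to the barycenter of the simplex with all fibre-points equal to $\hat{x}$. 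This is a fibrewise homotopy over $\Delta(X)$; using that $j^n_\pi$ is a fibration and that $\Delta(X)\hookrightarrow X\times X$ is a cofibration, extend it by the relative homotopy lifting property to a vertical homotopy of $\sigma$ through sections, ending at a strict section $\sigma'$. Since the deformation keeps all fibre-points equal to $\hat{x}$, each $s_i:=\rho_i\sigma'$ satisfies $s_i\Delta=s_0$; and since $\sigma'(\Delta(X))$ lies in the open set $\bigcap_i W_i$, the neighborhood $N=(\sigma')^{-1}(\bigcap_i W_i)$ of $\Delta(X)$ maps into every $W_i$. Setting $U_i=(\sigma')^{-1}(W_i)\supseteq N\supseteq\Delta(X)$ then yields an admissible monoidal cover, so $\TC^M(X)\le n$.

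The main obstacle is precisely this last deformation in the converse: a section pinned to a single vertex over the diagonal cannot be disassembled directly into $n+1$ local sections each defined near the diagonal. The point to get right is that the spreading homotopy must be performed within the contractible simplex of constant paths $\hat{x}$, so as to preserve the condition $s_i\Delta=s_0$, and must be realized as a genuine vertical homotopy through sections, which is where the fibration property of $j^n_\pi$ together with the cofibration $\Delta(X)\hookrightarrow X\times X$ enters. The forward implication, by contrast, is handled cleanly by the diagonal-adapted partition of unity above.
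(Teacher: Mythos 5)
The paper gives no proof of this statement --- it is quoted from \cite{Dranishnikov} --- so your argument can only be judged on its own merits. Your forward implication is correct and is the standard one: assembling the local sections $s_i$ by a partition of unity subordinate to $\{U_i\}$, modified by a Urysohn function so that it concentrates at the distinguished vertex $i_0$ along the (closed) diagonal, does produce a strict section $\sigma$ with $\sigma\Delta=s_n$; your verification that $\{\phi_i\}$ is still a partition of unity subordinate to $\{U_i\}$ is fine.

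The converse, however, has a genuine gap: the ``spreading'' step deforms $\sigma$ over $\Delta(X)$ and then extends this to a vertical homotopy of sections by lifting against $j^n_\pi$ over $X\times X\times\{0\}\cup\Delta(X)\times I\hookrightarrow X\times X\times I$. That lifting requires $(X\times X,\Delta(X))$ to be a cofibred pair, i.e.\ $X$ locally equiconnected --- a hypothesis the theorem does not grant (paracompactness does not imply it), and which the paper itself is careful to add when it uses exactly this device in its proof that $\TC^M(X)=\relcat(\Delta)$. As written, your argument proves the equivalence only for locally equiconnected $X$. The deformation is in fact unnecessary: from $\sigma\Delta=s_n$ one gets the single open set $U_{i_0}=\sigma^{-1}(W_{i_0})\supseteq\Delta(X)$ with $\rho_{i_0}\sigma\Delta=s_0$; choosing, by normality of $X\times X$, an open $V$ with $\Delta(X)\subseteq V\subseteq\overline{V}\subseteq U_{i_0}$ and replacing, for $i\neq i_0$, the Schwarz set by the \emph{disjoint} union $\bigl(U_i\setminus\overline{V}\bigr)\cup V$ with section equal to $\rho_i\sigma$ on the first piece and to $\rho_{i_0}\sigma$ on $V$, yields $n+1$ open sets containing $\Delta(X)$, still covering $X\times X$, each carrying a continuous section that restricts to $s_0$ on the diagonal. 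This elementary repair stays within the stated hypotheses and should replace the homotopy-extension step.
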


This gives a characterization of $\TC^M$ which is very similar to
the definition of $\relcat$. Indeed, with this notation,
$\relcat(\Delta )\leq n$ if and only if $j^ n_{\pi}: *^ n_{X\times
X}X^I\to X\times X$ admits a homotopy section $\sigma $ such that
$\sigma \Delta \simeq s_n$.

\begin{remark}{\rm
Again, as in the case of sectional category (and in particular for
topological complexity) we will consider the statement of Theorem
\ref{Dran} as the definition of $\TC^M(X)$ without requiring the
space $X$ to be paracompact.}
\end{remark}

We will prove that, under a non very restrictive condition on $X$,
the equality $\TC^M(X)=\relcat(\Delta)$ holds. In order to see
this, we use the following lemma, proved by Harper \cite{H}.

\begin{lemma}{\rm
Consider the diagram $X\stackrel{u}{\longrightarrow
}B\stackrel{\pi }{\longleftarrow }E,$ where $\pi $ is a fibration
with a strict section $\sigma :B\rightarrow E.$ Suppose
$\widehat{u}:X\rightarrow E$ is a lift of $u,$ that is,
$\widehat{u}$ satisfies $\pi \widehat{u}=u$. If $\sigma u $ is
homotopic to $\widehat{u}$, then $\sigma u$ is fibrewise homotopic
to $\widehat{u}$ (over $B$).}
\end{lemma}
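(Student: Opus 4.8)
The plan is to take a free homotopy $G\colon X\times I\to E$ with $G_0=\widehat u$ and $G_1=\sigma u$ (both are genuine lifts of $u$, since $\pi\sigma=\mathrm{id}_B$ gives $\pi\sigma u=u=\pi\widehat u$) and to deform it into a \emph{vertical} homotopy, i.e.\ one whose projection to $B$ is constantly $u$. The only defect is the projected homotopy $g:=\pi G\colon X\times I\to B$, which runs from $\pi\widehat u=u$ to $\pi\sigma u=u$ but need not be stationary. Here the strict section enters decisively: composing with $\sigma$ produces a lift $\sigma g\colon X\times I\to E$ of $g$ whose two ends are \emph{both} $\sigma u$, so the concatenation $G'=G\ast\overline{\sigma g}$ (first traverse $G$, then the reverse of $\sigma g$) is again a homotopy from $\widehat u$ to $\sigma u$, but now $\pi G'=g\ast\overline{g}$. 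Since a path followed by its own reverse is nullhomotopic rel endpoints, $\pi G'$ is homotopic, rel $X\times\partial I$, to the constant homotopy at $u$; I fix such a homotopy $K\colon X\times I\times I\to B$, with $K(-,-,0)=\pi G'$, $K(-,-,1)=u\circ\mathrm{pr}_X$, and $K(x,0,t)=K(x,1,t)=u(x)$ throughout.

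Next I would lift $K$ through $\pi$ over the square. On the three faces $t=0$, $s=0$, $s=1$ of $X\times I\times I$ a lift is already prescribed and is compatible at the corners: put $G'$ on the bottom face $t=0$, the constant map $\widehat u$ on $s=0$, and the constant map $\sigma u$ on $s=1$ (these are lifts, as $K$ equals $u$ on the two vertical faces, and $\pi\widehat u=\pi\sigma u=u$). The inclusion of these three faces into $I\times I$ is, as a pair, homeomorphic to the inclusion $I\times\{0\}\hookrightarrow I\times I$, so the homotopy lifting property of the fibration $\pi$ (with parameter space $X\times I$) extends the prescribed data to a lift $\widetilde K\colon X\times I\times I\to E$ of $K$. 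Restricting to the top face $t=1$ gives $H(x,s):=\widetilde K(x,s,1)$: its projection is $K(x,s,1)=u(x)$, independent of $s$, so $H$ is vertical, while $H(x,0)=\widehat u(x)$ and $H(x,1)=\sigma u(x)$ are read off from the two vertical faces. Thus $H$ is the desired fibrewise homotopy over $B$.

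The step I expect to carry the real content is the construction of $G'$, not the lifting. Reversing and reconcatenating $G$ with itself would also kill the projection, but it would return us to $\widehat u$ rather than land us on $\sigma u$; it is precisely the section $\sigma$ that supplies a homotopy $\sigma g$ based at $\sigma u$ on both ends and covering $g$, so that the target $\sigma u$ is preserved while the base loop is trivialised. This is exactly where the hypothesis that $\pi$ admits a strict section is indispensable, mirroring the role of the homotopy retraction in the D-EH conjecture. The remaining points are routine: the explicit rel-endpoints nullhomotopy of $g\ast\overline g$, and the verification that the three-face inclusion into the square is a pair-homeomorphism onto $(I\times I,\,I\times\{0\})$, after which the fibration's homotopy lifting property applies verbatim.
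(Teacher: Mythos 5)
The paper does not actually prove this lemma: it is quoted from Harper \cite{H} without proof, so there is no in-text argument to compare yours against. Your proof is correct and complete, and it is the standard argument: the strict section is used exactly where it must be, to produce the self-homotopy $\sigma g$ of $\sigma u$ covering the base track $g$, so that the concatenation $G\ast\overline{\sigma g}$ still runs from $\widehat u$ to $\sigma u$ but projects to $g\ast\overline g\simeq u\,\mathrm{pr}_X$ rel endpoints; lifting that nullhomotopy over the square via the pair homeomorphism of $(I\times I,\ \hbox{three faces})$ with $(I\times I,\ I\times\{0\})$ and reading off the top face is routine and all the corner compatibilities you need do hold. Nothing is missing.
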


Recall that a \emph{locally equiconnected space} is a space $X$ in
which the diagonal map $\Delta :X\rightarrow X\times X$ is a
(closed) cofibration. The class of locally equiconnected spaces is
large enough. For instance, CW-complexes and metrizable spaces fit
on such class.

\begin{theorem}{\rm If $X$ is a locally equiconnected space,
then $\TC^M(X)=\relcat(\Delta)$.}
\end{theorem}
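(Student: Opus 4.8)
The plan is to prove the equality by establishing both inequalities separately, using the two characterizations already available. We have the characterization of $\TC^M(X)$ from Theorem \ref{Dran}: $\TC^M(X)\leq n$ iff $j^n_\pi$ admits a \emph{strict} section $\sigma$ with $\sigma\Delta=s_n$ (equality on the nose). We also have the remark immediately following it: $\relcat(\Delta)\leq n$ iff $j^n_\pi$ admits a \emph{homotopy} section $\sigma$ with $\sigma\Delta\simeq s_n$ (homotopy versions of both conditions). The inequality $\relcat(\Delta)\leq\TC^M(X)$ is then immediate, since a strict section satisfying the strict equation is in particular a homotopy section satisfying the homotopy equation. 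So the entire content lies in proving the reverse inequality $\TC^M(X)\leq\relcat(\Delta)$, where I must upgrade a homotopy section/homotopy-equality datum into a strict section/strict-equality datum.

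First I would set $n=\relcat(\Delta)$ and take a homotopy section $\sigma$ of the fibration $j^n_\pi$ with $\sigma\Delta\simeq s_n$. Since $j^n_\pi$ is a fibration, a homotopy section can be deformed into a strict section: the homotopy $j^n_\pi\sigma\simeq \mathrm{id}_{X\times X}$ can be lifted using the homotopy lifting property to replace $\sigma$ by a strict section $\sigma'$ with $\sigma'\simeq\sigma$, and since $s_n$ is a lift of $\Delta$ (recall $j^n_\pi s_n=\Delta$), we still have $\sigma'\Delta\simeq s_n$. So without loss of generality $\sigma$ is already a strict section of $j^n_\pi$ satisfying only the \emph{homotopy} condition $\sigma\Delta\simeq s_n$. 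The remaining task is to modify $\sigma$ so that $\sigma\Delta=s_n$ holds strictly, while keeping it a strict section.

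This is exactly where the two hypotheses enter, and it is the main obstacle. I would apply Harper's Lemma with the diagram $X\stackrel{\Delta}{\to}X\times X\stackrel{j^n_\pi}{\leftarrow}*^n_{X\times X}X^I$: here $s_n$ plays the role of the lift $\widehat{u}$ of $u=\Delta$ (since $j^n_\pi s_n=\Delta$), and $\sigma\Delta$ is homotopic to $s_n$ by the previous step. Harper's Lemma then promotes this to a \emph{fibrewise} homotopy over $X\times X$ between $\sigma\Delta$ and $s_n$. Now I would invoke local equiconnectedness: the assumption that $\Delta:X\to X\times X$ is a closed cofibration. Using that $\Delta$ is a cofibration, together with the fibrewise homotopy just obtained, I can apply the standard deformation argument for cofibration–fibration pairs to extend/adjust $\sigma$ on all of $X\times X$ so that it becomes a new strict section $\tilde\sigma$ agreeing strictly with $s_n$ along the subspace $\Delta(X)$, i.e. $\tilde\sigma\Delta=s_n$. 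Concretely, the fibrewise homotopy defined along $\Delta(X)$ is extended over $X\times X$ by the cofibration property, and lifted through the fibration $j^n_\pi$ to move $\sigma$ to $\tilde\sigma$ without disturbing the section condition. This yields the strict data required by Theorem \ref{Dran}, giving $\TC^M(X)\leq n=\relcat(\Delta)$.

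The delicate point to handle carefully is the interaction of the cofibration and fibration structures in this last step: one must ensure that extending the homotopy via the cofibration $\Delta$ and lifting it via the fibration $j^n_\pi$ can be done simultaneously so that the result is genuinely a strict section (the section equation $j^n_\pi\tilde\sigma=\mathrm{id}$ must be preserved throughout the deformation, which is why the fibrewise version from Harper's Lemma, rather than a free homotopy, is essential). The hypothesis that $\Delta$ is a \emph{closed} cofibration guarantees $\Delta(X)$ is suitably embedded so that the homotopy extension property applies, and this is precisely what makes the "local equiconnectedness" assumption the right one for the argument.
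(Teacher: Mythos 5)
Your proposal is correct and follows essentially the same route as the paper: reduce to a strict section of the fibration $j^n_\pi$, apply Harper's Lemma to upgrade $\sigma\Delta\simeq s_n$ to a fibrewise homotopy over $X\times X$, and then use the closed cofibration $\Delta$ together with the fibration $j^n_\pi$ to lift the homotopy defined on $X\times X\times\{0\}\cup\Delta(X)\times I$ over the projection, taking its time-$1$ end as the new strict section. You correctly identify the key point that the fibrewise (rather than free) homotopy is what keeps the deformation over the identity and hence preserves the section equation.
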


\begin{proof}
Obviously, $\mbox{relcat}(\Delta )\leq \mbox{TC}^M(X).$ Now assume
$\mbox{relcat}(\Delta )=n$ and consider $\sigma :X\times
X\rightarrow *^n_{X\times X}X$ such that $j^ n_{\pi}\sigma =id$
and $\sigma \Delta \simeq s_n$. Therefore, by previous lemma, we
obtain $F:\sigma \Delta \simeq _{X\times X}s_n$ a fibrewise
homotopy over $X\times X$. Now, as $(X\times X,\Delta (X))$ is a
closed cofibred pair and $j^ n_{\pi}$ a fibration we can take a
lift in the diagram
$$\xymatrix{
{X\times X\times \{0\}\cup \Delta (X)\times I} \ar@{^(->}[d]
\ar[rr]^(.6)h & & {*^n_{X\times X}X^I} \ar[d]^{j^ n_{\pi}} \\
{X\times X\times I} \ar[rr]_{pr} \ar@{.>}[urr]^{\widetilde{h}} & &
{X\times X,} }$$ \noindent where $h$ is the map defined as
$h(x,y,0)=\sigma (x,y)$ and $h(x,x,t)=F(x,t).$ Then, defining
$\sigma ':=\widetilde{h}i_1$ we have that $j^ n_{\pi}\sigma '=id$
and $\sigma '\Delta =s_n.$ This means that $\mbox{TC}^M(X)\leq n.$
\end{proof}

\begin{corollary}{\rm
The D-EH conjecture contains the I-S conjecture.}
\end{corollary}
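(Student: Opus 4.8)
The plan is to apply the D-EH conjecture to the single map $f=\Delta\colon X\to X\times X$ and then translate both sides of the resulting equality into the language of topological complexity. The whole point is that the diagonal is a very special map: it always admits a retraction, and so it automatically falls within the scope of the D-EH conjecture.

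First I would verify the hypothesis of the D-EH conjecture for $f=\Delta$. Either projection $pr_i\colon X\times X\to X$ is a strict (hence homotopy) retraction, since $pr_i\Delta=\mathrm{id}_X$. Thus $\Delta$ admits a homotopy retraction, and assuming the D-EH conjecture we obtain $\secat(\Delta)=\relcat(\Delta)$.

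Next I would identify the two sides with the topological complexity and its monoidal version. For the left-hand side, recall that $\TC(X)=\secat(\pi)$ and that $\pi s_0=\Delta$ with $s_0\colon X\to X^I$ a homotopy equivalence; since sectional category is invariant under precomposition with a homotopy equivalence, $\secat(\Delta)=\secat(\pi)=\TC(X)$. For the right-hand side, I would invoke the theorem just proved, which gives $\relcat(\Delta)=\TC^M(X)$ whenever $X$ is locally equiconnected.

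The only genuinely delicate point is matching the classes of spaces in the two conjectures: the I-S conjecture concerns locally finite simplicial complexes, whereas the identification $\relcat(\Delta)=\TC^M(X)$ requires $X$ to be locally equiconnected. This is not a real obstacle, since a locally finite simplicial complex is a CW-complex and hence locally equiconnected, as noted just before the theorem. Combining the three identifications yields $\TC(X)=\secat(\Delta)=\relcat(\Delta)=\TC^M(X)$, which is precisely the statement of the I-S conjecture.
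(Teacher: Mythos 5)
Your proposal is correct and follows exactly the paper's route: the paper's own proof consists of observing that the projection is a (homotopy) retraction of $\Delta$, with the identifications $\secat(\Delta)=\TC(X)$ and $\relcat(\Delta)=\TC^M(X)$ (the latter from the preceding theorem for locally equiconnected spaces) left implicit. You simply make those implicit steps explicit, including the remark that locally finite simplicial complexes are CW-complexes and hence locally equiconnected.
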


\begin{proof}
The diagonal map $\Delta :X\rightarrow X\times X$ admits the
projection $p_2:X\times X\rightarrow X$ as an obvious (homotopy)
retraction.
\end{proof}

Using this result we obtain a slight improvement of Theorem
\ref{Dran-IS} part (ii).

\begin{corollary}{\rm
Let $X$ be a connected CW H-space. Then
$$\mbox{TC}(X)=\mbox{TC}^M(X)=\mbox{cat}(X)=\mbox{cat}(X\times X/\Delta
(X)).$$}
\end{corollary}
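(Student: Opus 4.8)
The plan is to prove that the four quantities coincide by squeezing them between $\cat(X)$ at both ends, using the chain
$$\cat(X)\leq \TC(X)\leq \TC^M(X)=\relcat(\Delta)=\cat(X),$$
which forces every inner term to equal $\cat(X)$; the identification with $\cat(X\times X/\Delta(X))$ is then appended. Here the first inequality is Farber's general lower bound $\cat(X)\leq\TC(X)$, the second is the trivial $\TC(X)\leq\TC^M(X)$, and the central equality $\TC^M(X)=\relcat(\Delta)$ is the theorem just proved, applicable because a connected CW-complex is locally equiconnected. Thus the only substantial point is the last equality $\relcat(\Delta)=\cat(X)$.

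To establish $\relcat(\Delta)=\cat(X)$ I would transport the whole join tower of $\pi$ onto that of the based path fibration via the H-space structure, following Farber's computation of $\TC$ for group-like spaces but keeping track of the relative (monoidal) condition. Since $X$ is a connected CW H-space it is group-like, so the shearing self-map $(x,z)\mapsto(x,xz)$ of $X\times X$ is a homotopy equivalence, and $h\colon X\times PX\to X^I$, $h(x,\gamma)(t)=x\cdot\gamma(t)$, is a fibre homotopy equivalence from $\mathrm{id}_X\times ev\colon X\times PX\to X\times X$ (over the shear) to $\pi\colon X^I\to X\times X$. Under this equivalence the diagonal $\Delta(X)$ corresponds to $X\times\{e\}$ and the constant-path section $s_0$ corresponds to the canonical lift $x\mapsto(x,\hat e)$. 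As the join construction and the relative-section condition are invariant under fibre homotopy equivalence, $j^n_\pi$ is identified with $\mathrm{id}_X\times j^n_{ev}$, where $j^n_{ev}$ is the Ganea fibration $G_n(X)\to X$, and the condition $\sigma\Delta\simeq s_n$ becomes precisely the relative-section condition for $G_n(X)\to X$ at the basepoint. Since $\relcat(ev)=\cat(X)$ (the relative category of the based path fibration is the relative-category description of $\cat$), this yields $\relcat(\Delta)=\cat(X)$ and hence $\cat(X)=\TC(X)=\TC^M(X)=\relcat(\Delta)$.

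Finally, to bring in the cofibre, I would invoke the identity $\relcat(i)=\cat(C_i)$ valid for any closed cofibration $i$ with cofibre $C_i$. Since $X$ is locally equiconnected, $\Delta\colon X\to X\times X$ is a closed cofibration with cofibre $X\times X/\Delta(X)$, so $\relcat(\Delta)=\cat(X\times X/\Delta(X))$; combined with the previous paragraph this gives $\cat(X\times X/\Delta(X))=\cat(X)$. As a sanity check one sees directly, via the shear equivalence $X\times X/\Delta(X)\simeq (X\times X)/(X\times\{e\})$, that $p_2$ exhibits $X$ as a pointed homotopy retract of this cofibre, which already yields the easy half $\cat(X)\leq\cat(X\times X/\Delta(X))$.

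I expect the main obstacle to be the second paragraph: one must verify carefully that the relative condition $\sigma\Delta\simeq s_n$ is preserved under the fibre homotopy equivalence $h$ and under iteration of the join, and that the \emph{homotopy} unit of a CW H-space (rather than a strict one) does not obstruct the identifications. This is exactly where the hypotheses are used, ``CW'' for local equiconnectivity and well-pointedness and ``connected'' for group-likeness, hence invertibility of the shear. The two external facts on which the argument rests are the homotopy invariance of $\relcat$ and the cofibration identity $\relcat(i)=\cat(C_i)$ (of which I strictly need only the inequality $\cat(C_\Delta)\leq\relcat(\Delta)$ to close the squeeze).
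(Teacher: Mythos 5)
Your squeeze $\cat(X)\le \TC(X)\le \TC^M(X)=\relcat(\Delta)\le \cat(X)$ is the right skeleton, and the result is correct, but your route is genuinely different from the paper's: the paper disposes of this corollary in one line by citing Theorem 11 of \cite{DoeraeneElHaouariConjecture} (the H-group case of the D-EH conjecture, which for a cofibration with homotopy retraction into a CW H-group identifies $\secat$ and $\relcat$ with $\cat$ of the cofibre), together with \cite{L-S} and \cite{GC-V}, whereas you essentially reprove that special case via the shear map. What your approach buys is self-containedness; what it costs is that you must carry the relative condition through the fibrewise identification, which is the delicate point you rightly flag. That step can be streamlined: since the shear $\phi(x,z)=(x,xz)$ is a homotopy equivalence and $\phi\circ(\mathrm{id}_X\times c_e)\simeq\Delta$ (where $c_e:*\to X$ is the basepoint inclusion and one uses the homotopy $x\cdot e\simeq x$), homotopy invariance of $\relcat$, the inequality $\relcat(\mathrm{id}_Z\times f)\le\relcat(f)$ (the join construction commutes with $Z\times-$), and $\relcat(*\to X)=\cat(X)$ for connected $X$ give $\relcat(\Delta)\le\cat(X)$ without manipulating the whole fibrewise join tower of $\pi$.

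One genuine correction is needed: the ``identity $\relcat(i)=\cat(C_i)$ valid for any closed cofibration $i$'' that you invoke is false as stated. Only $\cat(C_i)\le\relcat(i)$ holds in general; it follows from the Whitehead characterization of $\relcat$ by pushing the lift $\varphi:X\to T^n(i)$ down to the quotient, exactly as in the paper's argument that $\wcat(X/Y)\le\wrelcat(f)$, upgraded from weak to genuine category. The reverse inequality fails, e.g.\ for $BG\hookrightarrow CBG$ with $G$ a nontrivial acyclic group: the cofibre is contractible, so $\cat(C_i)=0$, while $\relcat(i)\ge 1$ because the inclusion is not a homotopy equivalence. You correctly observe that you only need the true half $\cat(C_\Delta)\le\relcat(\Delta)$, so your proof closes, but that half should be cited (or proved) on its own rather than as part of an identity. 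The remaining external inputs --- $\cat(X)\le\TC(X)$, homotopy invariance of $\relcat$, group-likeness of connected CW H-spaces, and the retract argument giving $\cat(X)\le\cat(X\times X/\Delta(X))$ --- are all standard and correctly deployed.
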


\begin{proof}
It follows directly from Theorem 11 in
\cite{DoeraeneElHaouariConjecture}. See also \cite{L-S} and
\cite{GC-V}.
\end{proof}

\section{A stable version of D-EH conjecture.}

In this section we prove that the D-EH conjecture holds after
suspension. In order to make precise our statement we introduce
approximations of the sectional category and relative category of
a map in the same spirit as the $\sigma^ i$-category (see
\cite{sigmacat} or \cite{C-L-O-T}).

Let $i\geq 1$ be an integer and $f:Y\to X$ a map. By suspending
$i$ times Diagram (\ref{diag-relcat}) we get a homotopy comutative
diagram:
$$\xymatrix{\Sigma^ iY\ar[r]^{\Sigma^ i\iota_n} \ar[rd]_{\Sigma^ i f} &  \Sigma^ i\ast^n_XY \ar[d]^{\Sigma^ ij^n_f}\\
&\Sigma^ iX. }$$ We then define:
\begin{itemize}
\item $\sigma^i\secat(f)$ to be the least integer $n$ such that $\Sigma^ ij^n_f$ admits a homotopy
section;

\item $\sigma^i\relcat(f)$ to be the least integer $n$ such that $\Sigma^ ij^n_f$ admits a homotopy section
$\sigma$ which satisfies $\sigma \Sigma^ if \simeq \Sigma^
i\iota_n$.
\end{itemize}

In order to give the proof of next theorem we will use the
following well-known result:

\begin{lemma}{\rm \label{splitting} Let $Y\stackrel{f}{\to} X \stackrel{\lambda}{\to} C_f$ be a homotopy cofibre sequence.
If $f:Y\to X$ admits a homotopy retraction $r,$ then there exists
a map $\sigma: \Sigma C_ f\to \Sigma X$such that $\Sigma \lambda
\sigma\simeq id$ and $\sigma\Sigma \lambda+\Sigma f\Sigma r\simeq
id$.}
\end{lemma}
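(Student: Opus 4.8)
The plan is to work entirely in the homotopy category after one suspension, where $\Sigma X$, $\Sigma Y$ and $\Sigma C_f$ are all co-H spaces, so that each set $[\Sigma X, W]$ carries a natural group structure in which homotopy classes can be added and subtracted, and pre/post-composition with co-H maps are homomorphisms. The starting observation is that since $rf\simeq \mathrm{id}_Y$, suspending gives $\Sigma r\circ\Sigma f\simeq \mathrm{id}_{\Sigma Y}$, so $\Sigma f$ is a split monomorphism in the homotopy category with left inverse $\Sigma r$. I would then extend the given cofibre sequence to its Puppe sequence
$$Y\xrightarrow{f}X\xrightarrow{\lambda}C_f\xrightarrow{\delta}\Sigma Y\xrightarrow{\Sigma f}\Sigma X\xrightarrow{\Sigma\lambda}\Sigma C_f\to\cdots$$
and use co-exactness of this sequence after applying $[-,W]$.

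The candidate for the complementary idempotent is $g:=\mathrm{id}_{\Sigma X}-\Sigma f\circ\Sigma r\in[\Sigma X,\Sigma X]$. First I would factor $g$ through $\Sigma\lambda$. Precomposing with the co-H map $\Sigma f$ (so that $(\Sigma f)^*$ is a homomorphism) gives $g\circ\Sigma f\simeq \Sigma f-\Sigma f\circ(\Sigma r\circ\Sigma f)\simeq \Sigma f-\Sigma f\simeq *$. By co-exactness of $[\Sigma C_f,\Sigma X]\xrightarrow{(\Sigma\lambda)^*}[\Sigma X,\Sigma X]\xrightarrow{(\Sigma f)^*}[\Sigma Y,\Sigma X]$, the relation $(\Sigma f)^*g=*$ yields a map $\sigma:\Sigma C_f\to\Sigma X$ with $\sigma\circ\Sigma\lambda\simeq g$. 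This choice of $\sigma$ already delivers the second required identity: since the sum is taken in the group $[\Sigma X,\Sigma X]$, one has $\sigma\circ\Sigma\lambda+\Sigma f\circ\Sigma r\simeq g+\Sigma f\circ\Sigma r=(\mathrm{id}_{\Sigma X}-\Sigma f\circ\Sigma r)+\Sigma f\circ\Sigma r\simeq\mathrm{id}_{\Sigma X}$.

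It then remains to verify $\Sigma\lambda\circ\sigma\simeq\mathrm{id}_{\Sigma C_f}$. Postcomposing $\sigma\circ\Sigma\lambda\simeq g$ with $\Sigma\lambda$ and using $\Sigma\lambda\circ\Sigma f=\Sigma(\lambda f)\simeq *$ gives $\Sigma\lambda\circ\sigma\circ\Sigma\lambda\simeq\Sigma\lambda\circ g\simeq\Sigma\lambda$. To cancel the trailing $\Sigma\lambda$ I need $(\Sigma\lambda)^*$ injective, equivalently the connecting map $\Sigma\delta$ to be null. This is where the retraction is used a second time: in the co-exact sequence $[\Sigma X,W]\xrightarrow{(\Sigma f)^*}[\Sigma Y,W]\xrightarrow{\delta^*}[C_f,W]$ the map $(\Sigma f)^*$ is surjective, having the right inverse $(\Sigma r)^*$, so $\delta^*=0$; evaluating at $\mathrm{id}_{\Sigma Y}$ gives $\delta\simeq *$, hence $\Sigma\delta\simeq *$ and $(\Sigma\lambda)^*$ is injective. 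Applying this to $\Sigma\lambda\circ\sigma\circ\Sigma\lambda\simeq\Sigma\lambda$ yields $\Sigma\lambda\circ\sigma\simeq\mathrm{id}_{\Sigma C_f}$, completing the proof.

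I expect the main obstacle to be the bookkeeping around the co-H/group structures — ensuring that the additions and subtractions of homotopy classes are legitimate and that pre- and post-composition with $\Sigma f$ and $\Sigma\lambda$ distribute over them — together with the key conceptual point that a split monomorphism $\Sigma f$ forces the connecting map to vanish. With that vanishing in hand, the cancellation producing the section property $\Sigma\lambda\circ\sigma\simeq\mathrm{id}$ is the only genuinely delicate step; everything else is formal manipulation inside the Puppe sequence.
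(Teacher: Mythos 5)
Your proof is correct. The paper itself gives no argument for this lemma --- it is invoked as a ``well-known result'' without proof --- and what you have written is precisely the standard Puppe-sequence argument the authors have in mind: split the identity of $\Sigma X$ as $\Sigma f\Sigma r$ plus a complementary class, factor the complement through $\Sigma\lambda$ by co-exactness, and use the splitting a second time to kill the connecting map $\delta$ so that $(\Sigma\lambda)^*$ is injective and the section identity $\Sigma\lambda\sigma\simeq\mathrm{id}$ can be extracted. The points you flag as delicate are exactly the right ones, and they all go through: precomposition by $\Sigma f$ and $\Sigma\lambda$ are homomorphisms because these are suspensions (hence co-H maps), postcomposition is always a homomorphism, the groups need not be abelian but your manipulations respect the order of the factors, and the occasional sign $-\Sigma f$ appearing in the Barratt--Puppe sequence only changes $\sigma$ by composition with the coinversion, which is harmless for the existence statement.
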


\begin{theorem}{\rm \label{sigma-version}
If $f:Y\to X$ admits a homotopy retraction then, for any $i\geq
1$, $\sigma^i \secat(f)=\sigma^i\relcat(f)$.}
\end{theorem}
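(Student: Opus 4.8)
The plan is to prove the two inequalities $\sigma^i\secat(f)\leq \sigma^i\relcat(f)$ and $\sigma^i\relcat(f)\leq \sigma^i\secat(f)$. The first is immediate from the definitions: any homotopy section of $\Sigma^i j^n_f$ satisfying the extra relative condition $\sigma\Sigma^i f\simeq \Sigma^i\iota_n$ is in particular a homotopy section, so $\sigma^i\secat(f)\leq\sigma^i\relcat(f)$ holds for any map without using the retraction hypothesis. The content is therefore entirely in the reverse inequality, and this is where the homotopy retraction $r$ of $f$ and Lemma \ref{splitting} must be used.

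For the hard direction, suppose $\sigma^i\secat(f)=n$, so that $\Sigma^i j^n_f$ admits a homotopy section, say $s$, with $\Sigma^i j^n_f\circ s\simeq \mathrm{id}_{\Sigma^i X}$. The goal is to modify $s$ into a new section $\sigma$ that additionally satisfies the relative condition $\sigma\circ\Sigma^i f\simeq \Sigma^i\iota_n$. First I would apply Lemma \ref{splitting} to the map $\Sigma^{i-1} f$ (so that one suspension of the cofibre sequence $\Sigma^{i-1}Y\to\Sigma^{i-1}X\to C_{\Sigma^{i-1}f}$ yields the splitting data over $\Sigma^i$); this produces a map $g:\Sigma C_{\Sigma^{i-1}f}\to\Sigma^i X$ with $\Sigma\lambda\circ g\simeq\mathrm{id}$ and, crucially, the splitting identity $g\circ\Sigma\lambda+\Sigma^i f\circ\Sigma^i r\simeq \mathrm{id}_{\Sigma^i X}$, where the sum is taken using the co-H-structure on the suspension $\Sigma^i X$. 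The idea is then to correct the given section $s$ by composing/co-adding with the contribution $\Sigma^i\iota_n\circ\Sigma^i r$ coming from the retraction, using this splitting to control how the corrected map behaves when precomposed with $\Sigma^i f$.

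Concretely, I would set $\sigma := (s\circ g\circ\Sigma\lambda) + (\Sigma^i\iota_n\circ\Sigma^i r)$, interpreting the $+$ via the comultiplication on $\Sigma^i X$. Then I must verify two things. First, that $\sigma$ is still a homotopy section of $\Sigma^i j^n_f$: this follows because $\Sigma^i j^n_f$ is a co-H-map-compatible target once one recalls $\Sigma^i j^n_f\circ s\simeq\mathrm{id}$, $\Sigma^i j^n_f\circ\Sigma^i\iota_n\simeq\Sigma^i f$ (from the commutativity of the suspended Diagram (\ref{diag-relcat})), and the splitting identity $g\circ\Sigma\lambda+\Sigma^i f\circ\Sigma^i r\simeq\mathrm{id}$; composing $\Sigma^i j^n_f$ through the sum should collapse $\sigma$ to $\mathrm{id}_{\Sigma^i X}$. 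Second, that $\sigma\circ\Sigma^i f\simeq\Sigma^i\iota_n$: here precomposing with $\Sigma^i f$ kills the first summand because $\Sigma\lambda\circ\Sigma^i f\simeq *$ (as $\lambda\circ f$ is null in the cofibre sequence), while the second summand gives $\Sigma^i\iota_n\circ\Sigma^i r\circ\Sigma^i f\simeq\Sigma^i\iota_n$ since $r\circ f\simeq\mathrm{id}_Y$.

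The main obstacle will be the careful bookkeeping of the co-H-space additions and making sure every homotopy is compatible with the comultiplication, since all of these ``$+$'' operations only make sense up to homotopy and their naturality under composition must be invoked in the right order; in particular, verifying that $\Sigma^i j^n_f$ distributes over the co-addition and that the two defining homotopies can be chosen coherently is the delicate point. A secondary technical care is to align the indices of suspension so that Lemma \ref{splitting}, which is stated for a single suspension of a cofibre sequence, is applied to $\Sigma^{i-1}f$ and thereby delivers the required data on $\Sigma^i$; this is routine once the cofibre sequence $C_{\Sigma^{i-1}f}\simeq\Sigma^{i-1}C_f$ is identified, but it should be spelled out to keep the argument clean.
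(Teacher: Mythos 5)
Your proposal is correct and follows essentially the same route as the paper: both apply Lemma \ref{splitting} to obtain the splitting of $\Sigma^i X$ and then correct the given section to $s\,\sigma\,\Sigma^i\lambda+\Sigma^i\iota_n\,\Sigma^i r$, checking the section and relative conditions via distributivity of the co-H addition and $\lambda f\simeq *$. The only difference is that you spell out the suspension-index bookkeeping in Lemma \ref{splitting} (applying it to $\Sigma^{i-1}f$), which the paper leaves implicit.
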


\begin{proof}
Let $i\geq 1$. We just have to prove the inequality $\sigma^i
\secat(f)\geq\sigma^i\relcat(f)$. Suppose that
$\sigma^i\secat(f)\leq n$ and consider the following homotopy
commutative diagram:
$$\xymatrix{
& \Sigma^i \ast^n_XY \ar[d]^{\Sigma^ij^n_f} &\\
\Sigma^ i Y \ar[ru]^{\Sigma^ i\iota_n} \ar[r]_{\Sigma^ i f} &
\Sigma^ i X \ar[r]_{\Sigma^ i \lambda}& \Sigma^ i C_f }$$ By Lemma
\ref{splitting} we know that there exists a map $\sigma: \Sigma^i
C_ f\to \Sigma^i X$ such that $\Sigma^i \lambda \sigma\simeq id$
and $\sigma\Sigma^i \lambda+\Sigma^i f\Sigma^i r\simeq id$. Let
$s:\Sigma^ iX\to \Sigma^i \ast^n_XY$ be the homotopy section of
$\Sigma^ij^n_f$ given by the hypothesis $\sigma^i\secat(f)\leq n$
and set $s':=s\sigma\Sigma^i \lambda+\Sigma^i \iota_n\Sigma^i r$.
We then have:
$$\Sigma^ij^n_fs'=\Sigma^ij^n_fs\sigma\Sigma^i \lambda+\Sigma^ij^n_f\Sigma^i \iota_n\Sigma^i r=\sigma\Sigma^i \lambda+\Sigma^if\Sigma^i r=id.$$
Therefore $s'$ is a homotopy section of $\Sigma^ij^n_f$. In
addition, since $\Sigma^if$ is a co-H-map and $\lambda f\simeq *$,
we have
$$s'\Sigma^if\simeq s\sigma\Sigma^i \lambda\Sigma^if+ \Sigma^i \iota_n\Sigma^i r\Sigma^if\simeq \Sigma^i \iota_n.$$
This means that $\sigma^i\relcat(f)\leq n$.
\end{proof}

If $X$ is a topological space, then we can straightforwardly
define $$\sigma^i\TC(X):=\sigma ^i\secat(\Delta
);\hspace{15pt}\sigma^i\mbox{TC}^M(X):=\sigma
^i\mbox{relcat}(\Delta )$$

\begin{corollary}{\rm Let $X$ be a space. For $i\geq 1$ one has $\sigma^i\TC(X)=\sigma^i\mbox{TC}^M(X).$}
\end{corollary}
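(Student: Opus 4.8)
The plan is to recognize this corollary as a direct specialization of Theorem \ref{sigma-version} to the diagonal map. First I would unwind the definitions just introduced, namely $\sigma^i\TC(X)=\sigma^i\secat(\Delta)$ and $\sigma^i\mathrm{TC}^M(X)=\sigma^i\relcat(\Delta)$, where $\Delta\colon X\to X\times X$ is the diagonal. With this translation, the assertion to be proved is exactly $\sigma^i\secat(\Delta)=\sigma^i\relcat(\Delta)$ for every $i\geq 1$.

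Next I would verify the sole hypothesis required to invoke Theorem \ref{sigma-version}, namely that $\Delta$ admits a homotopy retraction. This is immediate: either projection $p_j\colon X\times X\to X$ satisfies $p_j\Delta=\mathrm{id}_X$ on the nose, so $\Delta$ possesses an honest (indeed strict) retraction, hence in particular a homotopy retraction. This is precisely the observation already used to show that the D-EH conjecture contains the I-S conjecture.

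Having checked the hypothesis, I would simply apply Theorem \ref{sigma-version} with $f=\Delta$ to obtain $\sigma^i\secat(\Delta)=\sigma^i\relcat(\Delta)$ for all $i\geq 1$, and then rewrite this equality through the two definitions above to conclude $\sigma^i\TC(X)=\sigma^i\mathrm{TC}^M(X)$. There is no genuine obstacle here: the entire content lies in Theorem \ref{sigma-version}, and this corollary is purely a matter of specializing to $f=\Delta$ and remarking that its hypothesis is automatically satisfied.
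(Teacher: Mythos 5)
Your proposal is correct and is exactly the paper's (implicit) argument: the corollary is stated immediately after the definitions $\sigma^i\TC(X):=\sigma^i\secat(\Delta)$ and $\sigma^i\mbox{TC}^M(X):=\sigma^i\relcat(\Delta)$, and follows by applying Theorem \ref{sigma-version} to $f=\Delta$, whose hypothesis holds since the projection $p_2$ is a retraction of $\Delta$. Nothing is missing.
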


\section{A Berstein-Hilton weak version of the D-EH conjecture.}

Here we will consider weak versions of sectional and relative
categories in the sense of Berstein-Hilton and prove that the
corresponding D-EH conjecture for these invariants holds. Recall
that the relative category of a map $f:Y\rightarrow X$ has a
Whitehead characterization \cite{DoeraeneElHaouari}. Indeed, for
each $n$ we can consider the $n$-th fat wedge construction
$$t_n:T^n(f)\rightarrow X^{n+1}$$
\noindent inductively defined as follows. For $n=0$ we have
$T^0(f)=Y$ and $t_0=f:Y\rightarrow X$. If
$t_{n-1}:T^{n-1}(f)\rightarrow X^n$ is defined, then $t^n$ is the
join map
$$\xymatrix@C=0.5cm@R=0.6cm{ {\bullet } \ar[rr] \ar[dd] & & {X^n\times Y}
\ar[dl] \ar[dd]^{id_{X^n}\times f} \\
 & {T^n(f)} \ar@{.>}[dr]^{t_n} & \\
 {T^{n-1}(f)\times X} \ar[ur] \ar[rr]_{t_{n-1}\times id_X} & & {X^{n+1}}. }$$

We know that there exists a homotopy pullback (see \cite[Th.
25]{DoeraeneElHaouari} or \cite[Th. 8]{Weaksecat})
$$\xymatrix{
{*^n_X Y} \ar[rr]^{j^n_f} \ar[d]_{\varepsilon _n} & & {X}
\ar[d]^{\Delta _{n+1}} \\
{T^n(f)} \ar[rr]_{t_{n}} & & {X^{n+1}}   .}$$

Then we can also consider the following homotopy commutative
square, where $\tau _n$ is the composite $Y\rightarrow *^n_X
Y\stackrel{\varepsilon _n}{\longrightarrow }T^n(f):$

$$\xymatrix{
{Y} \ar[rr]^{\tau _n} \ar[d]_{f} & & {T^n(f)}
\ar[d]^{t_n} \\
{X} \ar[rr]_{\Delta _{n+1}} & & {X^{n+1}}   .}$$

\begin{proposition}{\rm \label{DH}\cite[Prop. 26]{DoeraeneElHaouari}
Let $f:Y\rightarrow X$ be an arbitrary map. Then
$\mbox{relcat}(f)\leq n$ if and only if there exists a map
$\varphi :X\rightarrow T^n(f)$ making commutative, up to
homotopies, the following diagram
$$\xymatrix{
{Y} \ar[rr]^{\tau _n} \ar[d]_{f} & & {T^n(f)}
\ar[d]^{t_n} \\
{X} \ar@{.>}[urr]^{\varphi}  \ar[rr]_{\Delta _{n+1}} & & {X^{n+1}}
.}$$}
\end{proposition}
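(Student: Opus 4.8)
The plan is to exploit the homotopy pullback square relating $\ast^n_X Y$ and $T^n(f)$ together with the universal property of homotopy pullbacks, treating the two implications separately.

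For the implication $\relcat(f)\leq n\Rightarrow$ existence of $\varphi$, I would start from a homotopy section $\sigma$ of $j^n_f$ with $j^n_f\sigma\simeq id_X$ and $\sigma f\simeq\iota_n$, and simply set $\varphi:=\varepsilon_n\sigma$. The commutativity of the pullback square gives $t_n\varepsilon_n\simeq\Delta_{n+1}j^n_f$, whence $t_n\varphi\simeq\Delta_{n+1}j^n_f\sigma\simeq\Delta_{n+1}$; likewise $\varphi f=\varepsilon_n\sigma f\simeq\varepsilon_n\iota_n=\tau_n$. Thus $\varphi$ makes the fat-wedge triangle homotopy commutative, and this direction is routine.

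For the converse, given $\varphi\colon X\to T^n(f)$ with $t_n\varphi\simeq\Delta_{n+1}$ and $\varphi f\simeq\tau_n$, I would first produce the section. The pair $(id_X,\varphi)$ together with the chosen homotopy $t_n\varphi\simeq\Delta_{n+1}$ constitutes a cone over the cospan $X\stackrel{\Delta_{n+1}}{\longrightarrow}X^{n+1}\stackrel{t_n}{\longleftarrow}T^n(f)$; by the universal property of the homotopy pullback $\ast^n_X Y$ there is a map $\sigma\colon X\to\ast^n_X Y$ with $j^n_f\sigma\simeq id_X$ and $\varepsilon_n\sigma\simeq\varphi$. The first relation exhibits $\sigma$ as a homotopy section of $j^n_f$, so that $\secat(f)\leq n$.

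It remains to upgrade this to $\sigma f\simeq\iota_n$, which is where the real work lies. Both $\sigma f$ and $\iota_n$ are maps $Y\to\ast^n_X Y$ and they agree up to homotopy on the two legs of the pullback: $j^n_f\sigma f\simeq f\simeq j^n_f\iota_n$ by Diagram (\ref{diag-relcat}), and $\varepsilon_n\sigma f\simeq\varphi f\simeq\tau_n=\varepsilon_n\iota_n$. I would deduce $\sigma f\simeq\iota_n$ from the uniqueness part of the universal property of the homotopy pullback. The main obstacle is precisely that agreement on the two projections is not by itself sufficient: one must verify that these two homotopies are compatible with the structure homotopy $t_n\varepsilon_n\simeq\Delta_{n+1}j^n_f$ of the square, that is, assemble a genuine homotopy of cones rather than two unrelated homotopies. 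To make this rigorous I would replace $\Delta_{n+1}$ and $t_n$ by fibrations so that $\ast^n_X Y$ becomes a strict pullback and the universal property holds on the nose, or equivalently invoke the Mather cube and gluing lemmas in the ambient model category; tracking the homotopy coming from Diagram (\ref{diag-relcat}), which pins down the cone structure of $\iota_n$, against the homotopy defining $\sigma$ then yields the required compatibility and hence $\sigma f\simeq\iota_n$, so $\relcat(f)\leq n$.
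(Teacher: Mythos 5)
First, note that the paper does not prove this proposition: it is quoted verbatim from Doeraene--El Haouari (Prop.~26 of \cite{DoeraeneElHaouari}), so there is no internal proof to compare against and your attempt has to stand on its own. Your forward implication is correct and is the routine one: setting $\varphi=\varepsilon_n\sigma$ and chasing the homotopy pullback square immediately gives $t_n\varphi\simeq\Delta_{n+1}$ and $\varphi f\simeq\varepsilon_n\iota_n=\tau_n$. In the converse, the construction of a whisker map $\sigma$ with $j^n_f\sigma\simeq id_X$ and $\varepsilon_n\sigma\simeq\varphi$ is also fine and yields $\secat(f)\leq n$.

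The gap is exactly at the step you flag, and your proposed resolution does not close it. There is no ``uniqueness part of the universal property of the homotopy pullback'': a homotopy pullback is only a \emph{weak} limit in the homotopy category, and two maps into it that become homotopic after composing with both legs need not be homotopic (already for the homotopy pullback of $\ast\to Z\leftarrow\ast$, i.e.\ $\Omega Z$, every map composes to a constant on both legs). Strictifying $t_n$ into a fibration does not help, because even then $\sigma f$ and $\iota_n$ agree on the two projections only up to homotopy, so the strict universal property never applies; what one actually needs is a \emph{coherent} pair of homotopies $j^n_f\sigma f\simeq j^n_f\iota_n$ and $\varepsilon_n\sigma f\simeq\varepsilon_n\iota_n$ whose images in $X^{n+1}$ agree after pasting with the structure homotopy of the square. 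Concretely, realizing $\ast^n_XY$ as the standard homotopy pullback, the discrepancy between the path component of $\sigma f(y)$ (coming from the chosen homotopy $t_n\varphi\simeq\Delta_{n+1}$ restricted along $f$) and that of $\iota_n(y)$, measured against the chosen homotopy $\varphi f\simeq\tau_n$, is a family of loops in $X^{n+1}$ that must be contracted compatibly in $y$; nothing in your argument shows this obstruction vanishes, and the phrase ``tracking the homotopy \dots yields the required compatibility'' is precisely the missing proof. This is why Doeraene--El Haouari do not argue this way: their proof proceeds through the inductive construction of the joins and fat wedges (with the cube/prism lemmas), carrying the maps $\iota_n$ and $\tau_n$ along as part of the structure so that the compatibility is built in rather than checked a posteriori. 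As written, your converse establishes only $\secat(f)\leq n$, not $\relcat(f)\leq n$.
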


In order to get a more manageable description of $T^n(f)$ and for
the sake of simplicity we will suppose in this section that
$f:Y\rightarrow X$ is a cofibration and we may therefore consider
the identification $Y\equiv f(Y).$ Observe that by a cofibration
we mean a \emph{closed} map having the usual homotopy extension
property.

\begin{proposition}{\rm \cite[Cor. 11]{Weaksecat} {\rm
Let $f:Y\hookrightarrow X$ be a cofibration. Then the $n$-th
sectional fat wedge $t_n:T^n(f)\hookrightarrow X^{n+1}$ is, up to
homotopy equivalence,
$$T^n(f)=\{(x_0,x_1,...,x_n)\in X^{n+1}\hspace{3pt}:x_i\in Y\hspace{3pt}\mbox{for some}\hspace{3pt}i\},$$
\noindent $t_n$ being the natural inclusion. Moreover, $t_n$ is a
cofibration.}}
\end{proposition}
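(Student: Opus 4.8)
The plan is to induct on $n$, replacing at each stage the homotopy pullback and homotopy pushout that define the join $t_n$ by their strict intersection and union models inside $X^{n+1}$. Write $W_m:=\{(x_0,\dots,x_m)\in X^{m+1}\ :\ x_i\in Y\ \text{for some}\ i\}$. The base case $n=0$ is immediate, since $T^0(f)=Y=W_0$ and $t_0=f$ is a cofibration by hypothesis. For the inductive step I assume that $t_m\colon T^m(f)\to X^{m+1}$ is, up to homotopy equivalence over $X^{m+1}$, the inclusion $W_m\hookrightarrow X^{m+1}$, and that this inclusion is a cofibration.

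Because the join depends only on the homotopy classes of its two defining maps, I may compute $t_{m+1}$ as the join of the inclusions $A:=W_m\times X\hookrightarrow X^{m+2}$ and $B:=X^{m+1}\times Y\hookrightarrow X^{m+2}$, the first of which is $(W_m\hookrightarrow X^{m+1})\times\mathrm{id}_X$ and the second $\mathrm{id}_{X^{m+1}}\times(Y\hookrightarrow X)$. The heart of the argument is the homotopy pullback $P$ of $A\to X^{m+2}\leftarrow B$. Here I use that homotopy pullbacks commute with products together with the product form of the two maps: in the $X^{m+1}$-factor the map coming from $B$ is the identity, and in the $X$-factor the map coming from $A$ is the identity. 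Since the homotopy pullback of $W_m\hookrightarrow X^{m+1}\xleftarrow{=}X^{m+1}$ is equivalent to $W_m$ and that of $X\xrightarrow{=}X\hookleftarrow Y$ is equivalent to $Y$, I get $P\simeq W_m\times Y$; tracing the two projections identifies $P\to A$ and $P\to B$ with the inclusions of the intersection $A\cap B=W_m\times Y$ into $A$ and into $B$.

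It then remains to realise the join as the homotopy pushout of $A\hookleftarrow P\hookrightarrow B$. Under the equivalence above the two legs become the inclusions $W_m\times Y\hookrightarrow W_m\times X$ and $W_m\times Y\hookrightarrow X^{m+1}\times Y$, both of which are cofibrations, being products of the cofibrations $Y\hookrightarrow X$ and $W_m\hookrightarrow X^{m+1}$ with identities. Hence the homotopy pushout agrees with the strict pushout, which is the union $(W_m\times X)\cup(X^{m+1}\times Y)=W_{m+1}$ inside $X^{m+2}$, with the join map being the inclusion. Finally, Lillig's union theorem, applied to the closed cofibrations $W_m\times X$ and $X^{m+1}\times Y$ and their intersection $W_m\times Y$, shows that $W_{m+1}\hookrightarrow X^{m+2}$ is again a cofibration, completing the induction.

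I expect the homotopy-pullback step to be the main obstacle. The identification $P\simeq A\cap B$ is \emph{not} a formal property of cofibred subspaces: for generic closed cofibrations $A,B\subseteq C$ the homotopy pullback can be far larger than the intersection (already two overlapping arcs in $S^1$ produce a homotopy pullback with infinitely many components), and what saves us is precisely the product form of the fat-wedge inclusions, each being an identity in the factor complementary to the other. The remaining difficulties are purely point-set: one must check that the inclusions in sight are closed cofibrations, so that the strict pushout computes the homotopy pushout and Lillig's theorem applies, and this is exactly where the hypothesis that $f$ be a closed cofibration is used, via stability of cofibrations under products with identities.
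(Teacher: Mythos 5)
The paper offers no proof of this statement --- it is quoted from \cite[Cor.~11]{Weaksecat} --- so there is nothing internal to compare against; judged on its own, your argument is correct and is essentially the standard inductive one. The two points that actually need care are exactly the ones you isolate and handle properly: the identification of the homotopy pullback with the intersection $W_m\times Y$ is not a formal property of closed cofibred subspaces but follows from the product form of the two fat-wedge inclusions (homotopy pullbacks commute with products, and in each factor one leg is an identity), and the point-set input --- closedness of the cofibrations, so that the strict pushout of $W_m\times X\hookleftarrow W_m\times Y\hookrightarrow X^{m+1}\times Y$ carries the subspace topology of $X^{m+2}$ and Lillig's union theorem applies --- is available because the paper's convention is that cofibrations are closed. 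One could add a half-sentence noting that $W_m\times Y\hookrightarrow X^{m+2}$ is itself a cofibration (composite $W_m\times Y\hookrightarrow W_m\times X\hookrightarrow X^{m+2}$ of cofibrations), which is the third hypothesis of Lillig's theorem, but this is immediate from the same product stability you already invoke.
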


In this case one can check that $\tau _n:Y\rightarrow T^n(f)$ is
given, up to homotopy equivalence, as $\tau _n(a)=(a,a,...,a).$ If
$\Delta _{n+1}:X\rightarrow X^{n+1}$ denotes the diagonal map,
then there is a strictly commutative diagram
$$\xymatrix{
{Y} \ar[rr]^{\tau _n} \ar@{^{(}->}[d]_{f} & & {T^n(f)}
\ar@{^{(}->}[d]^{t_n} \\
{X} \ar[rr]_{\Delta _{n+1}} & & {X^{n+1}}.  }$$

Now we introduce a refined version of Proposition \ref{DH}. In
order to do this we need the following well-known result whose
proof can be found for instance in \cite{War}.

\begin{lemma}\label{previo}{\rm
Suppose $f:Y\hookrightarrow X$ a cofibration and $\varphi
:X\rightarrow X$ a map such that $\varphi f=f$ and $\varphi \simeq
id_X.$ Then there exists a map $\psi :X\rightarrow X$ such that
$\psi f=f$ and $\psi \varphi \simeq id_X$ rel $Y.$}
\end{lemma}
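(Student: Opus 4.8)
The plan is to realise $\psi$ as a homotopy left inverse of $\varphi$ in the category of spaces under $Y$, using the homotopy extension property (HEP) of the cofibration $f$ to keep everything strict on $Y$. Identify $Y$ with $f(Y)\subseteq X$ and fix a free homotopy $H\colon X\times I\to X$ with $H_0=\mathrm{id}_X$ and $H_1=\varphi$. Since $\varphi$ fixes $Y$, the restriction $\ell:=H|_{Y\times I}\colon Y\times I\to X$ is a loop at the inclusion $Y\hookrightarrow X$ in the mapping space $\mathrm{Map}(Y,X)$, in the sense that $\ell(y,0)=\ell(y,1)=y$. First I would feed the reversed loop into the HEP: by the HEP of $f$ there is a homotopy $G\colon X\times I\to X$ with $G_0=\mathrm{id}_X$ and $G|_{Y\times I}=\bar\ell$, where $\bar\ell(y,u)=H(y,1-u)$. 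Setting $\psi:=G_1$ then gives $\psi(y)=\bar\ell(y,1)=y$, so that $\psi f=f$ holds strictly, and $G$ is a free homotopy $\mathrm{id}_X\simeq\psi$ whose trace on $Y$ is $\bar\ell$.

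Next I would exhibit a free homotopy $\Phi\colon\psi\varphi\simeq\mathrm{id}_X$ and analyse its restriction to $Y$. Concatenating $\psi\circ\bar H$ (from $\psi\varphi$ to $\psi$, where $\bar H_s=H_{1-s}$) with $\bar G$ (from $\psi$ to $\mathrm{id}_X$) produces such a $\Phi$. Both $\psi\varphi$ and $\mathrm{id}_X$ restrict to $\mathrm{id}_Y$, so $\Phi|_{Y\times I}$ is again a loop at the inclusion; tracing the two pieces shows that it is the concatenation $\bar m * \ell$, where $m:=\psi\circ\ell$ and $\bar m$ is its reverse. The key point will be that $m\simeq\ell$ rel endpoints in $\mathrm{Map}(Y,X)$. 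This I would read off from the square $\Gamma(y,s,u)=G(\ell(y,s),u)$: its bottom and top edges are $\ell$ and $m$, while both vertical edges are $\bar\ell$, so the standard square relation gives $\ell*\bar\ell\simeq\bar\ell* m$ rel endpoints, whence $m\simeq\ell$. Consequently $\Phi|_{Y\times I}\simeq\bar\ell*\ell$ is null-homotopic rel endpoints.

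Finally I would upgrade the free homotopy $\Phi$ to one that is stationary on $Y$. Since $f$ is a cofibration, so is the inclusion $X\times\partial I\cup Y\times I\hookrightarrow X\times I$; combining the null-homotopy of $\Phi|_{Y\times I}$ (rel $Y\times\partial I$) with the constant deformation on $X\times\partial I$, which agree on the overlap $Y\times\partial I$, and extending by the HEP, yields a deformation of $\Phi$ rel $X\times\partial I$ whose terminal homotopy $\Phi'$ is constant on $Y$. Then $\Phi'$ witnesses $\psi\varphi\simeq\mathrm{id}_X$ rel $Y$, as required. I expect the main obstacle to be the bookkeeping of the mapping-space loops: the maps $\varphi$ and $\psi$ move points of $Y$ off $Y$ during the homotopies, so the relation $m\simeq\ell$ cannot be read off naively and genuinely requires the square $\Gamma$; the concluding relative HEP step, while standard, also needs the compatibility of the two deformations on $Y\times\partial I$ to be checked with care.
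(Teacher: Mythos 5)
The paper does not prove this lemma itself but defers to Warner's notes; your argument is precisely the standard proof of that statement (Dold's lemma on maps under a cofibration that are freely homotopic to the identity), and it is correct: $\psi=G_1$ obtained by extending the reversed trace $\bar\ell$ satisfies $\psi f=f$, the square $\Gamma(y,s,u)=G(\ell(y,s),u)$ does give $\ell*\bar\ell\simeq\bar\ell*m$ and hence the null-homotopy of $\Phi|_{Y\times I}$ rel endpoints, and the final upgrade goes through. The only inputs beyond the HEP are the product theorem (that $X\times\partial I\cup Y\times I\hookrightarrow X\times I$ is again a cofibration, valid for the closed cofibrations the paper works with) and the square lemma, both standard.
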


\begin{proposition}\label{resumen}{\rm
Let $f:Y\hookrightarrow X$ be a cofibration. Then
$\mbox{relcat}(f)\leq n$ if and only if there exists a map $\phi
:X\rightarrow T^n(f)$ such that $\phi f=\tau _n$ and $t_n \phi
\simeq \Delta _{n+1}$ rel $Y.$}
\end{proposition}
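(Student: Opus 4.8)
The plan is to refine the homotopy-commutative characterization of Proposition \ref{DH} into the strict-on-$Y$ statement, using the cofibration hypothesis together with Lemma \ref{previo}. The backward implication is immediate: if such a $\phi$ exists with $\phi f=\tau_n$ and $t_n\phi\simeq\Delta_{n+1}$ rel $Y$, then in particular $\phi f\simeq\tau_n$ and $t_n\phi\simeq\Delta_{n+1}$, so $\phi$ witnesses the hypothesis of Proposition \ref{DH} and hence $\mbox{relcat}(f)\leq n$. All the work lies in the forward implication.

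So assume $\mbox{relcat}(f)\leq n$. By Proposition \ref{DH} there is a map $\varphi:X\to T^n(f)$ with $\varphi f\simeq\tau_n$ and $t_n\varphi\simeq\Delta_{n+1}$. First I would make the left-hand triangle strictly commutative. Since $f:Y\hookrightarrow X$ is a cofibration, the pair $(X,Y)$ has the homotopy extension property; applying it to the map $\varphi:X\to T^n(f)$ together with the homotopy $\varphi f\simeq\tau_n$ on $Y$ produces a map $\phi:X\to T^n(f)$, homotopic to $\varphi$, with $\phi f=\tau_n$ on the nose. As $\phi\simeq\varphi$ we still have $t_n\phi\simeq\Delta_{n+1}$, but so far only freely, not rel $Y$.

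The main obstacle is to promote this last homotopy to one that is constant on $Y$, and here I would pass to coordinates. Writing $\phi=(\phi_0,\dots,\phi_n)$ with $\phi_i=p_it_n\phi:X\to X$, the relation $\phi f=\tau_n$ (recalling $t_n\tau_n=\Delta_{n+1}f$) forces $\phi_i f=f$ for every $i$, and $t_n\phi\simeq\Delta_{n+1}$ gives $\phi_i\simeq id_X$ coordinatewise. Each $\phi_i$ therefore satisfies exactly the hypotheses of Lemma \ref{previo}, yielding $\psi_i:X\to X$ with $\psi_i f=f$ and $\psi_i\phi_i\simeq id_X$ rel $Y$. Since $\psi_i f=f$ means $\psi_i$ fixes $Y$ pointwise, the product $\Psi=\psi_0\times\cdots\times\psi_n:X^{n+1}\to X^{n+1}$ carries $T^n(f)$ into itself (if some coordinate of a point lies in $Y$, that coordinate is preserved), so $\Psi$ restricts to $\bar\Psi:T^n(f)\to T^n(f)$ with $t_n\bar\Psi=\Psi t_n$.

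Finally I would set $\phi':=\bar\Psi\phi$. Because each $\psi_i$ is the identity on $Y$, one checks $\bar\Psi\tau_n=\tau_n$, whence $\phi'f=\bar\Psi\tau_n=\tau_n$ strictly. Moreover $t_n\phi'=\Psi t_n\phi=(\psi_0\phi_0,\dots,\psi_n\phi_n)$, and assembling the rel-$Y$ homotopies $\psi_i\phi_i\simeq id_X$ coordinatewise gives a homotopy to $(id_X,\dots,id_X)=\Delta_{n+1}$; since each of these homotopies is constant equal to the inclusion on $Y$, the product homotopy is constant equal to $\Delta_{n+1}|_Y$ on $Y$, i.e. it is rel $Y$. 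Thus $t_n\phi'\simeq\Delta_{n+1}$ rel $Y$, which is exactly the required conclusion. The delicate points to verify carefully are that $\Psi$ genuinely preserves $T^n(f)$ and that the coordinatewise rel-$Y$ homotopies patch into a single rel-$Y$ homotopy, both of which reduce to the identity $\psi_i|_Y=id_Y$.
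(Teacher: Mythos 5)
Your argument is correct and follows essentially the same route as the paper's proof: strictify $\varphi f=\tau_n$ using the cofibration, pass to coordinates $\varphi_i$ with $\varphi_i f=f$ and $\varphi_i\simeq id_X$, apply Lemma \ref{previo} to each coordinate, and reassemble $(\psi_0\varphi_0,\dots,\psi_n\varphi_n)$ into a map into $T^n(f)$ with the required rel-$Y$ homotopy. Your explicit packaging via the self-map $\Psi=\psi_0\times\cdots\times\psi_n$ of $X^{n+1}$ is just a slightly more formal presentation of the same verification the paper does pointwise.
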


\begin{proof}
Suppose that $\mbox{relcat}(f)\leq n$ and take $\varphi
:X\rightarrow T^n(f)$ such that $\varphi f\simeq \tau _n$ and
$t_n\varphi \simeq \Delta _{n+1}.$ Since $f$ is a cofibration we
can suppose without loss of generality that $\varphi f=\tau _n$
and $t_n\varphi \simeq \Delta _{n+1}.$ Take a homotopy $L:
t_n\varphi \simeq \Delta _{n+1}$ and consider the notation
$t_n\varphi =(\varphi _0,...,\varphi _n)$, $L=(L_0,...,L_n)$ with
$\varphi _i:X\rightarrow X$ and $L_i:X\times I\rightarrow X$ for
all $i\in \{0,1,...,n\}.$ Note that $\varphi _i f=f$ and that
$L_i:\varphi _i\simeq id_X.$ Therefore, by previous lemma, we can
find a map $\psi _i:X\rightarrow X$ such that $\psi _i f=f$ and a
homotopy $L'_i:\psi _i\varphi_i\simeq id_X$ rel $A.$ We set
$$\phi _i:=\psi _i \varphi _i:X\rightarrow X.$$
Taking into account that $(\phi _0(x),...,\phi _n(x))\in T^n(f)$
for all $x\in X,$ we obtain a map $\phi :X\rightarrow T^n(f)$ such
that $t_n\phi =(\phi _0,...,\phi _n).$ Obviously, $\phi f=\tau _n$
and $L'=(L'_0,...,L'_n)$ is a homotopy $L':t_n\phi \simeq \Delta
_{n+1}$ rel $Y.$
\end{proof}

If $f:Y\hookrightarrow X$ is a cofibration then, for each $n\geq
0$ we can take the cofibre sequence
$T^n(f)\stackrel{t_n}{\longrightarrow
}X^{n+1}\stackrel{q_n}{\longrightarrow }X^{n+1}/T^n(f)$ obtaining
a diagram
$$\xymatrix{
 & {T^n(f)} \ar@{^{(}->}[d]^{t_n} \\
 {X} \ar[r]_{\Delta _{n+1}} & {X^{n+1}} \ar[d]^{q_n} \\
  & {X^{n+1}/T^{n}(f).}
}$$

Recall from \cite{Weaksecat} that the \emph{weak sectional
category} of $f$, $\mbox{wsecat}(f),$ is defined as the least $n$
such that $q_n\Delta _{n+1}\simeq *.$

\begin{definition}{\rm
We define the \emph{weak relative category} of $f:Y\hookrightarrow
X,$ denoted $\mbox{wrelcat}(f),$ as the least $n$ such that
$q_n\Delta _{n+1}\simeq *$ rel $Y.$}
\end{definition}

\begin{proposition}{\rm
Let $f:Y\hookrightarrow X$ be a cofibration. Then the following
chain of inequalities holds
$$\mbox{nil}\hspace{2pt}H^*(X,Y)\leq \mbox{wcat}(X/Y)\leq \mbox{wrelcat}(f)\leq
\mbox{relcat}(f).$$}
\end{proposition}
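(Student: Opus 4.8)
The plan is to prove the chain of inequalities

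$$\nil H^*(X,Y)\leq \wcat(X/Y)\leq \mbox{wrelcat}(f)\leq \relcat(f)$$

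by establishing each of the three inequalities separately, working from right to left, since the rightmost comparison is the most direct consequence of the preceding propositions.

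For the inequality $\mbox{wrelcat}(f)\leq\relcat(f)$, I would start from Proposition \ref{resumen}: if $\relcat(f)\leq n$, there is a map $\phi:X\to T^n(f)$ with $\phi f=\tau_n$ and $t_n\phi\simeq\Delta_{n+1}$ rel $Y$. Composing with the quotient map $q_n:X^{n+1}\to X^{n+1}/T^n(f)$ and using that $q_n t_n\simeq *$ (being a cofibre sequence), I would conclude $q_n\Delta_{n+1}\simeq q_n t_n\phi\simeq *$ rel $Y$; the ``rel $Y$'' survives because the homotopy $t_n\phi\simeq\Delta_{n+1}$ is itself rel $Y$ and $q_n$ respects the basepoint condition coming from $\phi f=\tau_n$. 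This gives $\mbox{wrelcat}(f)\leq n$, hence $\mbox{wrelcat}(f)\leq\relcat(f)$.

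For the middle inequality $\wcat(X/Y)\leq\mbox{wrelcat}(f)$, I would compare the relative construction with the absolute one. The point is that the fat-wedge construction for the quotient $X/Y$ (i.e.\ for $\ast\hookrightarrow X/Y$, computing $\wcat(X/Y)$) should factor through, or be dominated by, the relative quotient $X^{n+1}/T^n(f)$. I would identify $X^{n+1}/T^n(f)$ with an appropriate smash-type quotient and produce a comparison map that carries the ``rel $Y$'' nullhomotopy of $q_n\Delta_{n+1}$ to a nullhomotopy of the reduced diagonal on $X/Y$; collapsing $Y$ in both source and target turns the ``rel $Y$'' condition into an honest basepoint-preserving nullhomotopy witnessing $\wcat(X/Y)\leq n$. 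For the leftmost inequality $\nil H^*(X,Y)\leq\wcat(X/Y)$, I would invoke the standard cup-length lower bound for weak category: the nilpotency of the reduced cohomology ring $\tilde H^*(X/Y)\cong H^*(X,Y)$ is bounded above by $\wcat(X/Y)$, since a nullhomotopy of the reduced diagonal forces the vanishing of all $(n+1)$-fold products in the kernel, which is exactly the relative cohomology.

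The main obstacle I expect is the middle step: making precise the identification relating $X^{n+1}/T^n(f)$ to the data computing $\wcat(X/Y)$, and checking that the ``rel $Y$'' refinement passes correctly to the absolute (reduced) setting. One must verify that the comparison map exists at the space level (or at least up to the homotopies needed) and that collapsing $Y$ does not destroy the nullhomotopy. The other two inequalities are essentially formal, following immediately from Proposition \ref{resumen} together with cofibre-sequence properties, and from the classical relationship between cup-length nilpotency and weak category.
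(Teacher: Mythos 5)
Your proposal is correct and follows essentially the same route as the paper: the right-hand inequality via Proposition \ref{resumen} and $q_nt_n=\ast$, the middle one by descending the rel~$Y$ nullhomotopy of $q_n\Delta_{n+1}$ to the quotient $X/Y$ through a comparison with the smash-power of $X/Y$, and the left-hand one by the standard cup-length bound for $\wcat$. The identification you flag as the main obstacle is exactly what the paper supplies, namely the homotopy equivalence $w:X^{n+1}/T^n(f)\to (X/Y)^{[n+1]}$ induced on cofibres by the homotopy pushout of \cite[Prop.~12]{Weaksecat}, after which $\bar{H}([x],t):=wH(x,t)$ is well defined precisely because $H$ is rel~$Y$.
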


\begin{proof}

>From the pushout
$$\xymatrix{
{Y} \ar[r] \ar@{^{(}->}[d]_{f} & {*} \ar[d] \\
{X} \ar[r]_p & {X/Y} }$$ \noindent we obtain the following
strictly commutative diagram, where the top square is a homotopy
pushout (see \cite[Prop. 12]{Weaksecat}) and the bottom square is
induced by the homotopy cofibre construction where the induced map
$w$ is a homotopy equivalence:
$$\xymatrix{
 & {T^n(f)} \ar[rr] \ar@{^{(}->}[d]_{t_n} & & {T^n(X/Y)} \ar@{^{(}->}[d]^{j_n} \\
{X} \ar[r]^{\Delta _{n+1}}  & {X^{n+1}} \ar[rr]_{p^{n+1}}
\ar[d]_{q_n} & & {(X/Y)^{n+1}}
\ar[d]^{q'_n} \\
 & {X^{n+1}/T^n(f)} \ar[rr]^{\simeq }_{w} & & {(X/Y)^{[n+1]}.}
}$$ Now, if $\mbox{wrelcat}(f)=n$ and we take a homotopy
$H:X\times I\rightarrow X^{n+1}/T^n(f)$ with $H:q_n\Delta
_{n+1}\simeq *$ rel $Y$, then we can define
$$\bar{H}:X/Y\times I\rightarrow (X/Y)^{[n+1]}$$
\noindent by $\bar{H}([x],t):=wH(x,t).$ Then $\bar{H}$ is a well
defined continuous map such that $\bar{H}:q'_n\Delta _{n+1}\simeq
*.$ This proves that $\mbox{wcat}(X/Y)\leq \mbox{wrelcat}(f).$
Therefore
$$\mbox{nil}\hspace{2pt}H^*(X,Y)=\mbox{cuplength}\hspace{2pt}(X/Y)\leq
\mbox{wcat}(X/Y)\leq \mbox{wrelcat}(f).$$

On the other hand, if $\mbox{relcat}(f)=n$, then by Proposition
\ref{resumen} there exists a map $\phi :X\rightarrow T^n(f)$ such
that $\phi f=\tau _n$ and $t_n\phi \simeq \Delta _{n+1}$ rel $Y.$
Therefore
$$q_n\Delta _{n+1}\simeq q_nt_n\phi =*\hspace{3pt}\mbox{rel}\hspace{3pt}Y$$
\noindent and $\mbox{wrelcat}(f)\leq n.$
\end{proof}

\begin{remark}{\rm

If $f^*$ denotes the induced homomorphism in cohomology, then
using Theorem 21(d) of \cite{Weaksecat} we immediately have the
following chain of inequalities
$$\mbox{nil}\hspace{2pt}\mbox{ker}(f^*)\leq \mbox{wsecat}(f)\leq \mbox{wrelcat}(f)\leq
\mbox{relcat}(f).$$

It is natural to ask whether
$\mbox{nil}\hspace{2pt}\mbox{ker}\hspace{2pt}(f^*)$ and
$\mbox{nil}\hspace{2pt}H^*(X,Y)$ are related or not. In Theorem
21(e) of \cite{Weaksecat} it was actually established that, if $f$
has a homotopy retraction, then
$\mbox{wsecat}(f)=\mbox{wcat}(X/Y)$ and
$$\mbox{nil}\hspace{2pt}\mbox{ker}\hspace{2pt}(f^*)=\mbox{cuplength}\hspace{2pt}(X/Y)=
\mbox{nil}\hspace{2pt}H^*(X,Y).$$}
\end{remark}

And finally, our last result in this section

\begin{theorem}{\rm
Let $f:Y\hookrightarrow X$ be a cofibration. Then
$\mbox{wrelcat}(f)=\mbox{wcat}(X/Y)$ holds. In particular, if $f$
admits a homotopy retraction, then
$\mbox{wrelcat}(f)=\mbox{wsecat}(f).$}
\end{theorem}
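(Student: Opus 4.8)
The plan is to prove the nontrivial inequality $\mbox{wrelcat}(f)\leq\mbox{wcat}(X/Y)$, since the reverse inequality $\mbox{wcat}(X/Y)\leq\mbox{wrelcat}(f)$ is already contained in the chain of inequalities of the preceding proposition. So I would suppose $\mbox{wcat}(X/Y)=n$; by definition the composite $q'_n\Delta_{n+1}\colon X/Y\to (X/Y)^{[n+1]}$ is (pointed) nullhomotopic, and the goal is to produce a nullhomotopy of $g:=q_n\Delta_{n+1}\colon X\to X^{n+1}/T^n(f)$ that is stationary on $Y$, i.e.\ a nullhomotopy rel $Y$.

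First I would record two facts about $g$. On the one hand $g$ sends $Y$ to the basepoint: indeed $\Delta_{n+1}(Y)=\tau_n(Y)\subseteq T^n(f)$ and $q_n$ collapses $T^n(f)$, so $g|_Y=\ast$ and hence $g$ factors as $g=\overline g\circ p$ through the quotient $p\colon X\to X/Y$, for a pointed map $\overline g\colon X/Y\to X^{n+1}/T^n(f)$. On the other hand, the commutative diagram relating the fat wedges of $X$ and $X/Y$ gives $w\circ q_n=q'_n\circ p^{n+1}$; combining this with $p^{n+1}\Delta_{n+1}^X=\Delta_{n+1}^{X/Y}p$ and then cancelling the epimorphism $p$ yields $w\circ\overline g=q'_n\Delta_{n+1}^{X/Y}$.

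Now $q'_n\Delta_{n+1}^{X/Y}\simeq\ast$ and $w$ is a homotopy equivalence; since $f$ is a cofibration, $X/Y$ is well-pointed, so $w$ may be taken to be a \emph{pointed} homotopy equivalence (Dold). Therefore $\overline g$ admits a pointed nullhomotopy $\overline H\colon (X/Y)\times I\to X^{n+1}/T^n(f)$ with $\overline H(\ast,t)=\ast$ for all $t$. The final step is to transport this homotopy back to $X$: setting $H:=\overline H\circ(p\times \mathrm{id}_I)$ one gets $H_0=\overline g\,p=q_n\Delta_{n+1}$, $H_1=\ast$, and for $a\in Y$ one has $H(a,t)=\overline H(p(a),t)=\overline H(\ast,t)=\ast$. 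Thus $H$ is a nullhomotopy of $q_n\Delta_{n+1}$ that is constant at the basepoint on $Y$, which is exactly a nullhomotopy rel $Y$; hence $\mbox{wrelcat}(f)\leq n$. Combining with the reverse inequality gives $\mbox{wrelcat}(f)=\mbox{wcat}(X/Y)$, and for the last assertion I would simply invoke the Remark recalling \cite[Th.~21(e)]{Weaksecat}: when $f$ has a homotopy retraction one has $\mbox{wsecat}(f)=\mbox{wcat}(X/Y)$, whence $\mbox{wrelcat}(f)=\mbox{wsecat}(f)$.

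The main obstacle is precisely the bookkeeping in the third paragraph. A merely free nullhomotopy of $q'_n\Delta_{n+1}$ downstairs would pull back only to a free nullhomotopy of $g$ upstairs, with no control on $Y$; the argument genuinely requires both the nullhomotopy and the equivalence $w$ to be \emph{pointed}, so that precomposition with $p\times\mathrm{id}_I$ keeps the homotopy stationary on $Y$. This is exactly what well-pointedness of $X/Y$ — guaranteed by $f$ being a cofibration — supplies, and it is the reason the cofibration hypothesis enters the proof.
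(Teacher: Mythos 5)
Your proof is correct, and its overall strategy --- pulling a nullhomotopy of $q'_n\Delta_{n+1}$ on $X/Y$ back along $p$ to produce a nullhomotopy of $q_n\Delta_{n+1}$ that is stationary on $Y$ --- is the same as the paper's. The one step you handle differently is the cancellation of the homotopy equivalence $w$. The paper pulls back first, obtaining $wq_n\Delta_{n+1}\simeq *$ rel $Y$, and then removes $w$ upstairs by the Lifting Lemma of Baues applied to the square with the cofibration $f$ on the left and the equivalence $w$ on the right; this stays entirely in the rel-$Y$ world and never discusses basepoints. You instead factor $q_n\Delta_{n+1}=\overline g\,p$ through $X/Y$, cancel $w$ downstairs, and pull back a \emph{pointed} nullhomotopy of $\overline g$; this is where you invoke Dold's theorem and well-pointedness. (Strictly, Dold needs $X^{n+1}/T^n(f)$ and $(X/Y)^{[n+1]}$ well-pointed, while upgrading a free nullhomotopy of $\overline g$ to a pointed one needs $X/Y$ well-pointed --- you conflate these slightly, but all hold since $t_n$, $j_n$ and $f$ are cofibrations.) Your route has the merit of making explicit a point the paper's ``clearly'' glosses over: composing with $p\times\mathrm{id}$ yields a homotopy rel $Y$ only if the nullhomotopy downstairs is stationary at the basepoint. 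The paper's route via the Lifting Lemma sidesteps the pointed/free discussion altogether. The final assertion is handled identically in both, via $\mbox{wsecat}(f)=\mbox{wcat}(X/Y)$ for maps with a homotopy retraction.
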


\begin{proof}

It only remains to prove that $\mbox{wrelcat}(f)\leq
\mbox{wcat}(X/Y).$ So suppose that $\mbox{wcat}(X/Y)=n$ and take a
homotopy $H:X/Y\times I\rightarrow (X/Y)^{[n+1]}$ such that
$H:\Delta _{n+1}q'_n\simeq *.$ Then the composite
$$X\times I\stackrel{p\times id}{\longrightarrow }X/Y\times I\stackrel{H}{\longrightarrow }
(X/Y)^{[n+1]}$$ \noindent clearly gives a homotopy $q'_n\Delta
_{n+1}p\simeq *$ rel $Y.$ But from the commutative diagram
$$\xymatrix{
  {X} \ar[rr]^p \ar[d]_{\Delta _{n+1}} & & {X/Y} \ar[d]^{\Delta _{n+1}} \\
   {X^{n+1}} \ar[rr]_{p^{n+1}} \ar[d]_{q_n} & & {(X/Y)^{n+1}} \ar[d]^{q'_n} \\
  {X^{n+1}/T^n(f)} \ar[rr]^{\simeq }_{w} & & {(X/Y)^{[n+1]}}
}$$ \noindent we have that $q'_n\Delta _{n+1}p=wq_n\Delta _{n+1}$
and therefore $wq_n\Delta _{n+1}\simeq *$ rel $Y.$

 Now take the commutative square of solids arrows
$$\xymatrix{
{Y} \ar@{^{(}->}[d]_{f} \ar[rr]^(.4){*} & & {X^{n+1}/T^n(f)} \ar[d]^{w}_{\simeq } \\
{X} \ar[rr]_(.4){*} \ar@{.>}[urr] & & {(X/Y)^{[n+1]}}.   }$$ As
$q_n\Delta _{n+1}:X\rightarrow {X^{n+1}/T^n(f)}$ and the constant
map $*:X\rightarrow {X^{n+1}/T^n(f)}$ are two liftings of this
square, by the \emph{Lifting Lemma} \cite[page 90]{B} we have that
$$q_n\Delta _{n+1}\simeq *\hspace{3pt}\mbox{rel}\hspace{2pt} Y,$$
but this means that $\mbox{wrelcat}(f)\leq n.$

The second part of the theorem follows from the fact that
$\mbox{wsecat}(f)=\mbox{wcat}(X/Y)$ when $f$ admits a homotopy
retraction (see \cite{Weaksecat}).
\end{proof}

Recall that for a locally equiconnected space $X$ the diagonal map
$\Delta :X\rightarrow X\times X$ is a cofibration. Therefore we
naturally set
$$\mbox{wTC}^M(X):=\mbox{wrelcat}(\Delta )$$ \noindent and we
directly have the following corollary. Observe that, by definition
in \cite{Weaksecat}, $\mbox{wTC}(X)=\mbox{wsecat}(\Delta ).$

\begin{corollary}{\rm
If $X$ is a locally equiconnected space, then
$$\mbox{wTC}(X)=\mbox{wTC}^M(X)=\mbox{wcat}(X\times X/\Delta(X)).$$}
\end{corollary}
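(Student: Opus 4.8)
The plan is to derive both equalities directly from the immediately preceding theorem, specialized to the map $f=\Delta:X\to X\times X$. The first step is to verify that this $f$ meets the two hypotheses appearing in that theorem. Since $X$ is locally equiconnected, the diagonal $\Delta$ is by definition a (closed) cofibration, so the theorem applies with the source $Y$ replaced by $X$ and the target $X$ replaced by $X\times X$; correspondingly the quotient $X/Y$ becomes $(X\times X)/\Delta(X)$. Moreover $\Delta$ admits a strict (hence homotopy) retraction, namely either projection $p_i:X\times X\to X$, because $p_i\Delta=\mbox{id}_X$.

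Granting these two observations, the theorem yields at once $\mbox{wrelcat}(\Delta)=\mbox{wcat}((X\times X)/\Delta(X))$ and, via its ``in particular'' clause (which uses the existence of a homotopy retraction), $\mbox{wrelcat}(\Delta)=\mbox{wsecat}(\Delta)$. Unwinding the definitions $\mbox{wTC}^M(X):=\mbox{wrelcat}(\Delta)$ and $\mbox{wTC}(X)=\mbox{wsecat}(\Delta)$ then gives the chain
$$\mbox{wTC}(X)=\mbox{wsecat}(\Delta)=\mbox{wrelcat}(\Delta)=\mbox{wTC}^M(X)=\mbox{wcat}((X\times X)/\Delta(X)),$$
which is the assertion.

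Since everything reduces to invoking the theorem, there is no genuine obstacle beyond confirming the hypotheses; the only point requiring (minimal) care is to record explicitly that the projection furnishes the required retraction of $\Delta$, so that the homotopy-retraction half of the theorem can be applied to identify $\mbox{wsecat}(\Delta)$ with $\mbox{wrelcat}(\Delta)$.
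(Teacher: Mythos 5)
Your proposal is correct and matches the paper's (implicit) argument exactly: the paper likewise derives this corollary directly from the preceding theorem applied to $f=\Delta$, using that $\Delta$ is a cofibration for a locally equiconnected space and that the projection $p_i:X\times X\to X$ is a retraction of $\Delta$. Nothing is missing.
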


Dranishnikov conjectured in \cite{Dranishnikov} that
$\mbox{TC}^M(X)=\mbox{cat}(X\times X/\Delta(X)).$ The second
equality of the above corollary can be seen as a positive answer
to a weak version of this conjecture.

\begin{remark}{\rm
>From Iwase-Sakai's characterization of $\mbox{TC}^M(X)$ in the
pointed fibrewise setting (see \cite{IwaseSakaiConjecture},
\cite{IS}) A. Franc and P. Pave\v{s}i\'{c} introduced in
\cite{F-P} some lower bounds for $\mbox{TC}^M(X).$ In particular
they defined stable and weak versions of $\mbox{TC}^M(X).$ It is
possible to check that these invariants coming from the pointed
fibrewise setting are upper bounds for our $\sigma
^i\mbox{TC}^M(X)$ and $\mbox{wTC}^M(X)$ respectively. However we
do not know whether they are the same.

}
\end{remark}

\section{The D-EH conjecture in rational homotopy theory.}

In this section we assume that  $f:Y\rightarrow X$ is a map
between simply-connected spaces of finite type over $\Q$ and we
consider the rationalization $f_0:Y_0\rightarrow X_0$. In this
context the D-EH conjecture reads as: if $f:Y\to X$ admits a
homotopy retraction then $\secat(f_0)=\relcat(f_0)$.

The sectional category of $f_0$ can be characterized as follows in
terms of any surjective model of $f$ in the category $\cdga$ of
commutative differential graded algebras:

\begin{proposition}{\rm \cite{carrasquel}
Let $f:Y\rightarrow X$ be a map with surjective $\cdga$ model
$\varphi:(A,d)\rightarrow (B,d)$ and let $K=Ker\ \varphi$. Then
$\secat(f_0)$ is the smallest $n$ for which there exists a $\cdga$
morphism $\tau$ such that $\tau\circ i= \mu_{n+1},$
\[\xymatrix{
A^{\otimes n+1}\ar@{^{(}->}[d]_{i}\ar[r]^{\mu_{n+1}}&A\\
(A^{\otimes n+1}\otimes\Lambda W_{n+1},D)\ar[ur]_{\tau}&\\
}\] where $i$ is a relative Sullivan model for the projection
$\pi:A^{\otimes m+1}\rightarrow A^{\otimes m+1}/K^{\otimes n+1}$.
}\end{proposition}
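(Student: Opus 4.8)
The plan is to translate the fat-wedge characterization of sectional category into the category $\cdga$ and then replace a homotopy-commutative lifting by a strict one along a relative Sullivan model.

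First I would recall the topological input. Combining the join characterization of $\secat$ (Theorem \ref{char-sect}) with the homotopy pullback square relating $\ast^n_XY$, $T^n(f)$, $X$ and $X^{n+1}$ (the square with $\varepsilon_n$, $j^n_f$, $t_n$, $\Delta_{n+1}$ used already for $\relcat$), one gets that $\secat(f)\leq n$ if and only if the diagonal $\Delta_{n+1}:X\to X^{n+1}$ admits a homotopy lift through the fat-wedge inclusion $t_n:T^n(f)\to X^{n+1}$, i.e. there is a map $\varphi:X\to T^n(f)$ with $t_n\varphi\simeq\Delta_{n+1}$. Indeed, by the universal property of the homotopy pullback a homotopy section of $j^n_f$ is exactly such a $\varphi$ together with the connecting homotopy. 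Since $f$, and hence $f_0$, is a map between simply connected spaces of finite type over $\Q$, this characterization passes verbatim to the rationalization: $\secat(f_0)\leq n$ iff $\Delta_{n+1}$ lifts through $t_n$ up to homotopy after rationalizing.

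Next I would set up the $\cdga$ models. Taking $(A,d)$ as a model of $X$, the power $A^{\otimes n+1}$ models $X^{n+1}$ and the multiplication $\mu_{n+1}:A^{\otimes n+1}\to A$ models the diagonal $\Delta_{n+1}$. The crucial identification is that $t_n$ is modelled by the projection $\pi:A^{\otimes n+1}\to A^{\otimes n+1}/K^{\otimes n+1}$. This rests on the fact that, since $\varphi:A\to B$ is \emph{surjective} with kernel $K$, the pair $(A,K)$ is a relative model of $(X,Y)$; the cofibre $X^{n+1}/T^n(f)$ is the $(n+1)$-fold smash $(X/Y)^{\wedge(n+1)}$, and over $\Q$ the Künneth theorem gives $\tilde H^{*}\bigl((X/Y)^{\wedge(n+1)}\bigr)\cong H^{*}(K)^{\otimes(n+1)}\cong H^{*}(K^{\otimes n+1})$, so that $K^{\otimes n+1}$ is precisely the ideal realizing the relative structure of $(X^{n+1},T^n(f))$ and $A^{\otimes n+1}/K^{\otimes n+1}$ is a model of the fat wedge. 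I expect this to be the main obstacle: promoting this cohomological Künneth comparison to an actual $\cdga$ model of the inclusion $t_n$, and checking compatibility with $\mu_{n+1}$, is where the real work lies and is the content imported from \cite{carrasquel}.

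Finally I would dualize and rigidify. Applying the contravariant equivalence of rational homotopy theory, a homotopy lift $\varphi:X_0\to T^n(f_0)$ of $\Delta_{n+1}$ through $t_n$ corresponds to a $\cdga$ morphism $\psi:A^{\otimes n+1}/K^{\otimes n+1}\to A$ with $\psi\pi\simeq\mu_{n+1}$. To convert this homotopy-commutative datum into the strict equation $\tau\circ i=\mu_{n+1}$ of the statement, I would replace $\pi$ by its relative Sullivan model $i:A^{\otimes n+1}\hookrightarrow(A^{\otimes n+1}\otimes\Lambda W_{n+1},D)\xrightarrow{\ \simeq\ }A^{\otimes n+1}/K^{\otimes n+1}$ and invoke the homotopy lifting lemma for relative Sullivan algebras. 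Concretely, composing $\psi$ with the quasi-isomorphism gives a map $R\to A$ whose restriction along $i$ is homotopic to $\mu_{n+1}$; since $i$ is a cofibration this map deforms to a morphism $\tau$ with $\tau i=\mu_{n+1}$. Conversely any such $\tau$, together with the quasi-isomorphism, represents in $\mathrm{Ho}(\cdga)$ a morphism $\psi$ with $\psi\pi\simeq\mu_{n+1}$, hence a homotopy lift $\varphi$. This gives the equivalence and identifies $\secat(f_0)$ with the smallest $n$ admitting such a $\tau$.
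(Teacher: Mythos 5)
The paper itself gives no proof of this proposition --- it is quoted from \cite{carrasquel} --- so the comparison is with the argument of that reference, which does follow the route you outline: reduce $\secat$ to a homotopy lifting of $\Delta_{n+1}$ through the fat wedge $t_n$ via the homotopy pullback square, model $t_n$ rationally by the projection $\pi:A^{\otimes n+1}\to A^{\otimes n+1}/K^{\otimes n+1}$, and rigidify the homotopy lift into a strict $\tau$ along the relative Sullivan model $i$. Your first and last steps are sound: the equivalence between homotopy sections of $j^n_f$ and homotopy lifts of $\Delta_{n+1}$ through $t_n$ follows from the homotopy pullback square already displayed in the paper, the passage to rationalization is unproblematic for simply connected finite-type spaces, and the replacement of $\psi\pi\simeq\mu_{n+1}$ by a strict $\tau i=\mu_{n+1}$ is the standard lifting/homotopy-extension property of relative Sullivan algebras.

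The gap is exactly where you locate it, and it cannot be closed the way you propose. The assertion that $\pi$ is a $\cdga$ model of $t_n$, \emph{compatibly} with the square relating $\mu_{n+1}$ and $\Delta_{n+1}$, is the actual content of the proposition; deferring it to \cite{carrasquel} is circular, and your K\"unneth computation does not substitute for it. That computation only identifies the reduced cohomology of the cofibre $X^{n+1}/T^n(f)\simeq (X/Y)^{\wedge(n+1)}$ additively with $H^*(K^{\otimes n+1})$; it says nothing about the multiplicative structure of $T^n(f)$ itself, nor does it produce the commutative ladder of quasi-isomorphisms from $A^{\otimes n+1}\to A^{\otimes n+1}/K^{\otimes n+1}$ to $A_{PL}(X^{n+1})\to A_{PL}(T^n(f))$ under which $\mu_{n+1}$ corresponds to $\Delta_{n+1}$. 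The proof in the cited source instead proceeds by induction on $n$ using the join/pushout construction of $T^n(f)$ displayed in the paper: a homotopy pushout with one leg a cofibration is modelled by a fibre product of $\cdga$s along a surjection, and one checks that
$\bigl(A^{\otimes n}/K^{\otimes n}\otimes A\bigr)\times_{A^{\otimes n}/K^{\otimes n}\otimes B}\bigl(A^{\otimes n}\otimes B\bigr)\cong A^{\otimes n+1}/K^{\otimes n+1}$,
the kernel of $A^{\otimes n+1}$ mapping to this fibre product being $(K^{\otimes n}\otimes A)\cap(A^{\otimes n}\otimes K)=K^{\otimes n+1}$. Without this (or an equivalent five-lemma argument on the cofibre sequence carried out at the level of $\cdga$ morphisms, not just cohomology), the central identification remains unproved, so the proposal as written does not constitute a proof.
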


In order to estimate in terms of this data the relative category
of $f_0$ we consider the map $k_n:A\rightarrow (A\otimes\Lambda
W_{n+1},\overline{D})$ given by the pushout of $\mu_{m+1}$ and
$i$. It is easy to see that the existence of the map $\tau$ in
previous proposition is equivalent to the existence of a homotopy
retraction for $k_n$. In fact, $k_n$ is a model for the join map
$j^n_{f}:*^n_fY\rightarrow X$ and the morphism $l_n$ induced in
following diagram is a model for the map $\iota_n$ in Diagram
(\ref{diag-relcat}).

\begin{equation}\label{diag-rational}
\xymatrix@=1cm{
&&\frac{A^{\otimes n+1}}{K^{\otimes n+1}}\ar@/^3.0pc/[rddd]^{\overline{\prod_{n+1} \varphi}}\\
A^{\otimes n+1}\ar[d]_{\mu_{n+1}}\ar@{^{(}->}[rr]^{i}\ar[urr]^{\pi}&&(A^{\otimes n+1}\otimes\Lambda W_{n+1},D)\ar[u]^{\theta}_ {\simeq}\ar[d]\\
A\ar@/_2.0pc/[drrr]_{\varphi}\ar@{^{(}->}[rr]_{k_{n}}&&(A\otimes\Lambda W_{n+1},\overline{D})\ar@{-->}[dr]_{l_{n}}&\\
&&&B\\
}
\end{equation}

We can choose a relative model $i$ for $\pi$ such that the
quasi-isomorphism $\theta$ satisfies $\theta(W_{n+1})=0$. In this
case, we have, for any $\omega \in W_{n+1}$, $D\omega \in
K^{\otimes n+1} \oplus A^{\otimes n+1}\otimes \Lambda^ +W_{n+1}$
and $\overline{D}\omega \in K \oplus A\otimes \Lambda^ +W_{n+1}$.
Furthermore the induced morphism $l_n$ is such that
$\l_n(a)=\varphi(a)$ if $a\in A$ and $l_n(W_{n+1})=0$. These
remarks lead to

\begin{proposition}\label{relcat0}{\rm
Let $f\colon Y\rightarrow X$ be a map and $\varphi:A\rightarrow B$
a surjective \textbf{cdga} model for $f$ with $K=Ker\ \varphi$. If
there exists a \textbf{cdga} morphism $\tau\colon (A\otimes\Lambda
W_{n+1},\overline{D})\rightarrow A$ such that $\tau\circ k_n=Id_A$
and $\tau(W_{n+1})\subset K$ then $\relcat(f_0)\leq n$.
}\end{proposition}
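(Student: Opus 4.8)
The plan is to translate the topological condition $\relcat(f_0)\le n$ into the rational (cdga) setting and then verify it directly from the hypotheses on $\tau$. Recall from Definition \ref{defRelcat} that $\relcat(f_0)\le n$ means that the join map $j^n_{f_0}$ admits a homotopy section $\sigma$ with $\sigma f_0\simeq \iota_n$. Since $X$ and $Y$ are simply connected of finite type over $\Q$, the homotopy theory of such spaces is faithfully encoded by their cdga models, and the discussion preceding the statement identifies the relevant models: $k_n$ is a model for $j^n_{f_0}$, the induced morphism $l_n$ is a model for $\iota_n$, and $\varphi$ is a model for $f_0$. Under this dictionary a homotopy section $\sigma$ of $j^n_{f_0}$ corresponds to a cdga morphism $\tau\colon (A\otimes\Lambda W_{n+1},\overline{D})\to A$ with $\tau k_n\simeq \mathrm{Id}_A$, and the relative condition $\sigma f_0\simeq\iota_n$ corresponds, by contravariance of the model functor, to $\varphi\tau\simeq l_n$. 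Thus it suffices to produce such a $\tau$.

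I would then take the $\tau$ provided by the hypothesis and check that it meets both requirements, in fact strictly. The section condition is immediate: by assumption $\tau k_n=\mathrm{Id}_A$, so $\tau$ models a homotopy section of $j^n_{f_0}$. For the relative condition I would show that $\varphi\tau=l_n$ by comparing the two cdga morphisms on algebra generators. Since $k_n$ is the inclusion $A\hookrightarrow A\otimes\Lambda W_{n+1}$, the equality $\tau k_n=\mathrm{Id}_A$ gives $\tau(a)=a$ for every $a\in A$, whence $\varphi\tau(a)=\varphi(a)=l_n(a)$, using the recorded property that $l_n(a)=\varphi(a)$ on $A$. On the new generators $W_{n+1}$ the hypothesis $\tau(W_{n+1})\subset K=\mathrm{Ker}\,\varphi$ gives $\varphi\tau(\omega)=0=l_n(\omega)$, using $l_n(W_{n+1})=0$. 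As $A\otimes\Lambda W_{n+1}$ is generated as an algebra by $A$ together with $W_{n+1}$ and both $\varphi\tau$ and $l_n$ are algebra morphisms, it follows that $\varphi\tau=l_n$, and in particular $\varphi\tau\simeq l_n$.

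Combining the two verifications, $\tau$ models a homotopy section $\sigma$ of $j^n_{f_0}$ satisfying $\sigma f_0\simeq \iota_n$, so $\relcat(f_0)\le n$. The only genuinely delicate point is the first step, namely the faithful translation between the homotopy-commutative square of spaces that defines the relative category and the corresponding diagram of cdga morphisms; once the models $k_n$ and $l_n$ and the properties $l_n|_A=\varphi$ and $l_n(W_{n+1})=0$ established before the statement are in hand, the algebraic verification is a short computation on generators, and no homotopy of morphisms is even needed since both compatibilities hold on the nose.
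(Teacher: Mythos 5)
Your argument is correct and is precisely the one the paper intends: the proposition is stated there with no separate proof beyond the remarks preceding it (that $k_n$ models $j^n_{f}$, that $l_n$ models $\iota_n$, and that $l_n|_A=\varphi$ and $l_n(W_{n+1})=0$), and your verification that $\tau k_n=\mathrm{Id}_A$ and $\varphi\tau=l_n$ on generators is exactly how those remarks yield $\relcat(f_0)\leq n$. You simply make explicit the contravariant dictionary that the authors leave implicit.
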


Consider now the quotient map $p_n:(A,d) \to (A/K^{n+1},\bar d)$.
Let \[\bar{\varphi}: (A/K^{n+1},\bar d)\to (B,d)\] be the morphism
induced by $\varphi$. The following commutative diagram

\[\xymatrix@=1cm{
A^{\otimes n+1}\ar[d]_{\mu_{n+1}}\ar[r]^{\pi} & \frac{A^{\otimes n+1}}{K^{\otimes n+1}}\ar[d] \ar@/^3.0pc/[rdd]^{\overline{\prod_{n+1} \varphi}}\\
A\ar[r]\ar@/_2.0pc/[drr]_{\varphi}& A/K^{n+1} \ar[rd]^{\bar{\varphi}}\\
&&B,\\
}\] where the second vertical morphism is induced by the
multiplication, permits us to see that the morphism $l_n$ of
Diagram \ref{diag-rational} factors as
$$A\otimes \Lambda W_{n+1} \stackrel{\lambda_n}{\to} A/K^{n+1}\stackrel{\bar{\varphi}}{\to} B.$$
The morphism $\lambda_n$ satisfies $\lambda_nk_n=p_n$ and
$\lambda_n(W_{n+1})\subset Ker(\bar{\varphi})$. Observe now that,
if $K^{n+1}=0$, then $p_n=Id_A$ and $\bar{\varphi}=\varphi$.
Therefore the morphism $\tau:=\lambda_n$ satisfies the conditions
that give $\relcat(f_0)\leq n$ in Proposition \ref{relcat0}. We
hence obtain the following result where $\nil(K)$ denotes the
maximal length of a non trivial product in $K$.

\begin{corollary}{\rm
Let $f\colon Y\rightarrow X$ be a map and $\varphi:A\rightarrow B$
a surjective \textbf{cdga} model for $f$ with $K=Ker\ \varphi$.
Then $\relcat(f_0)\leq \nil(K)$. }\end{corollary}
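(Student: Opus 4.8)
The plan is to read the result off directly from the construction carried out just before the statement, specializing the morphism $\lambda_n$ to the value $n=\nil(K)$. First I would set $n:=\nil(K)$ and observe that, by the very definition of $\nil(K)$ as the maximal length of a nontrivial product in $K$, every product of $n+1$ elements of $K$ must vanish; that is, $K^{n+1}=0$. This is the single arithmetic input that activates the special case already discussed in the paragraph preceding the corollary.

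With this choice of $n$ the quotient map $p_n\colon A\to A/K^{n+1}$ becomes the identity, since $A/K^{n+1}=A/0=A$, and the induced morphism $\bar{\varphi}\colon A/K^{n+1}\to B$ coincides with $\varphi$. I would then invoke the factorization of $l_n$ as $\bar{\varphi}\circ\lambda_n$ together with the two properties established there, namely $\lambda_n k_n=p_n$ and $\lambda_n(W_{n+1})\subset\mathrm{Ker}(\bar{\varphi})$. Under the identifications just made these become $\lambda_n\colon A\otimes\Lambda W_{n+1}\to A$ with $\lambda_n k_n=\mathrm{Id}_A$ and $\lambda_n(W_{n+1})\subset\mathrm{Ker}(\varphi)=K$.

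Finally I would set $\tau:=\lambda_n$ and note that $\tau$ is a \cdga\ morphism satisfying precisely the two hypotheses of Proposition \ref{relcat0}, that is $\tau\circ k_n=\mathrm{Id}_A$ and $\tau(W_{n+1})\subset K$. Applying that proposition immediately gives $\relcat(f_0)\leq n=\nil(K)$, as claimed. There is no substantive obstacle in this argument: the entire analytic content has already been packaged into the construction of $\lambda_n$ and into Proposition \ref{relcat0}. The only point that needs a word of care is the passage between the algebraic condition $K^{n+1}=0$ and the combinatorial definition of $\nil(K)$; once one checks that $n=\nil(K)$ is exactly the value forcing all length-$(n+1)$ products in $K$ to vanish, the collapse of $\lambda_n$ to a retraction of $k_n$ landing in $A$ with $\lambda_n(W_{n+1})\subset K$ is automatic, and the conclusion follows.
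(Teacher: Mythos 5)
Your proof is correct and follows exactly the paper's own argument: the paper likewise sets $n=\nil(K)$, notes that $K^{n+1}=0$ forces $p_n=Id_A$ and $\bar{\varphi}=\varphi$, and takes $\tau:=\lambda_n$ to satisfy the hypotheses of Proposition \ref{relcat0}. No discrepancies.
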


We now specialize this discussion in the case of $f=\Delta:X\to
X\times X$. Since $\relcat(\Delta)=\TC^M(X)$ we write $\TC_0^M(X)$
instead of $\relcat(\Delta_0)$. A surjective \textbf{cdga} model
of $\Delta$ is given by the multiplication $\mu_A:A\otimes A\to A$
where $(A,d)$ is any \textbf{cdga} model of $X$. We thus obtain:

\begin{corollary}{\rm
Let $X$ be a space and let $(A,d)$ be a \textbf{cdga} model of
$X$. Then
 $$\TC_0(X)\leq \TC_0^M(X)\leq \nil(\ker \mu_A).$$
In particular, if $X$ admits a  \textbf{cdga} model $(A,d)$ such
that $\TC_0(X)= \nil(\ker \mu_A)$, then $\TC_0(X)=\TC_0^M(X)$.
}\end{corollary}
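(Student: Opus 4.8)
The plan is to obtain the statement as a formal consequence of results already in hand. For the first inequality I would unwind the definitions $\TC_0(X)=\secat(\Delta_0)$ and $\TC_0^M(X)=\relcat(\Delta_0)$ and then apply the general comparison $\secat(f)\le\relcat(f)$, valid for every map, to $f=\Delta_0$; this yields $\TC_0(X)\le\TC_0^M(X)$ at once.

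For the second inequality, the key point is that the multiplication $\mu_A\colon A\otimes A\to A$ is a surjective \textbf{cdga} model of the diagonal $\Delta\colon X\to X\times X$: the tensor square $A\otimes A$ models $X\times X$, the map $\mu_A$ realizes $\Delta$ algebraically, and surjectivity is forced by the unit of $A$. I would then apply the preceding corollary---which asserts $\relcat(f_0)\le\nil(\ker\varphi)$ for any surjective \textbf{cdga} model $\varphi$ of $f$---to this particular model, so that the role of $\ker\varphi$ is now played by $\ker\mu_A$ and we conclude $\TC_0^M(X)=\relcat(\Delta_0)\le\nil(\ker\mu_A)$.

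The ``in particular'' clause is then a squeeze argument: should some model $(A,d)$ of $X$ satisfy $\TC_0(X)=\nil(\ker\mu_A)$, the chain $\TC_0(X)\le\TC_0^M(X)\le\nil(\ker\mu_A)=\TC_0(X)$ collapses, forcing equality throughout and hence $\TC_0(X)=\TC_0^M(X)$. Since every ingredient is already established, I expect no genuine obstacle here; the only step warranting an explicit word of justification is the identification of $\mu_A$ as a surjective model of $\Delta$, which is standard in rational homotopy theory.
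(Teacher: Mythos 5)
Your proposal is correct and matches the paper's own (implicit) argument exactly: the paper likewise deduces the second inequality by applying the preceding corollary, $\relcat(f_0)\leq\nil(\ker\varphi)$, to the surjective model $\mu_A\colon A\otimes A\to A$ of the diagonal, and the first inequality and the ``in particular'' clause follow from $\secat\leq\relcat$ and the squeeze, just as you describe. No gaps.
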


Using this result together with Theorem 1.4 and Corollary 2.2 of \cite{JMP} we can
exhibit two important classes of spaces for which the rational
version of the Iwase-Sakai conjecture is true:

\begin{corollary}{\rm
Let $X$ be a simply-connected space. If $X$ is formal or has its
rational homotopy, $\pi_*(X)\otimes \Q$, of finite dimension and
concentrated in odd degrees, then $\TC_0(X)=\TC_0^M(X)$.
}\end{corollary}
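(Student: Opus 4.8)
The plan is to reduce the whole statement to the preceding corollary, which guarantees that $\TC_0(X)=\TC_0^M(X)$ as soon as we can exhibit a single \cdga\ model $(A,d)$ of $X$ for which the algebraic lower bound is sharp, i.e. $\TC_0(X)=\nil(\ker\mu_A)$. Indeed, that corollary furnishes the chain $\TC_0(X)\leq\TC_0^M(X)\leq\nil(\ker\mu_A)$ for \emph{every} model $(A,d)$, so in each of the two cases it suffices to produce a model realizing the reverse inequality $\nil(\ker\mu_A)\leq\TC_0(X)$. In other words, the entire argument amounts to checking that the cohomological-type lower bound for rational topological complexity is attained on a conveniently chosen model, and the two hypotheses on $X$ are exactly the two situations in which such a model is known.

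For the formal case I would take the cohomology model $(A,d)=(H^*(X;\Q),0)$, which is a \cdga\ model of $X$ precisely because $X$ is formal. Here $\nil(\ker\mu_A)$ is the rational zero-divisor cup-length of $X$, and I would invoke Theorem 1.4 of \cite{JMP}, which asserts that for a formal space this zero-divisor cup-length coincides with $\TC_0(X)$. This gives $\TC_0(X)=\nil(\ker\mu_{H^*(X;\Q)})$, so the preceding corollary applies and yields $\TC_0(X)=\TC_0^M(X)$.

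For the case where $\pi_*(X)\otimes\Q$ is finite-dimensional and concentrated in odd degrees, the cohomology model is no longer available, since such spaces need not be formal; instead I would take $(A,d)=(\Lambda V,d)$ to be the minimal Sullivan model of $X$, so that $V$ is finite-dimensional and located in odd degrees, and work directly with $(\Lambda V,d)$. In this setting Corollary 2.2 of \cite{JMP} provides the required sharpness $\TC_0(X)=\nil(\ker\mu_{\Lambda V})$, and the preceding corollary once more gives $\TC_0(X)=\TC_0^M(X)$.

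The main obstacle is not internal to the argument but lies in matching the external results: one must verify that Theorem 1.4 and Corollary 2.2 of \cite{JMP} are phrased so as to deliver exactly the equality with $\nil(\ker\mu_A)$ for these two specific models (cohomology in the formal case, the minimal model in the odd case). Once these sharpness statements are granted, the proof is merely a direct application, case by case, of the chain of inequalities furnished by the preceding corollary, with no further computation required.
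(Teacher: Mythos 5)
Your proposal is correct and follows exactly the paper's (very brief) argument: the preceding corollary reduces everything to exhibiting a model $(A,d)$ with $\TC_0(X)=\nil(\ker\mu_A)$, and Theorem 1.4 of \cite{JMP} (formal case, cohomology model) and Corollary 2.2 of \cite{JMP} (finite-dimensional odd rational homotopy, minimal model) supply precisely that sharpness in the two cases.
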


We finish this section with a weak version of the D-EH conjecture
that we can establish in the framework of rational homotopy
theory. As in the D-EH conjecture, we suppose that the map
$f:Y\rightarrow X$ admits a homotopy retraction.

Using standard techniques, we can consider a \textbf{cdga} model
$\varphi:(A,d)\to (B,d)$ of $f$ which admits a strict section,
\textit{i.e.} there exists a \textbf{cdga} morphism $s\colon B\to
A$ such that $\varphi s=Id_{B}$. Considering Diagram
(\ref{diag-rational}), the morphism $s$ makes $k_n$ a
$(B,d)$-module morphism and we define:

\begin{itemize}
\item $\msecat(\varphi)$ as the smallest $n$ such that $k_{n}$ admits a $(B,d)$-module retraction $r$;
\item $\mrelcat(\varphi)$ as the smallest $n$ such that $k_{n}$ admits a $(B,d)$-module retraction $r$ with $r(\Lambda^+ W_{n+1})\subset K$.
\end{itemize}
Our definition of $\mrelcat(\varphi)$ provides actually an
upper bound of a $(B,d)$-module version of the relative category in
the strict sense but it is not necessary to introduce an
intermediate notion since we have:
\begin{proposition}
$\mrelcat(\varphi)=\msecat(\varphi)$.
\end{proposition}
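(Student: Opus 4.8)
The inequality $\msecat(\varphi)\leq\mrelcat(\varphi)$ is immediate, since any $(B,d)$-module retraction $r$ of $k_n$ satisfying $r(\Lambda^+W_{n+1})\subset K$ is in particular a $(B,d)$-module retraction. The plan is therefore to prove the reverse inequality: starting from an arbitrary $(B,d)$-module retraction $r$ of $k_n$ (witnessing $\msecat(\varphi)\leq n$), I will correct it into one whose values on $\Lambda^+W_{n+1}$ land in $K$, without increasing $n$.

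The starting point is the decomposition $A=s(B)\oplus K$ afforded by the strict section, which holds because $\varphi s=\mathrm{Id}_B$ forces $a-s\varphi(a)\in K$ for every $a\in A$. First I would check that the associated projection $\rho:=\mathrm{Id}_A-s\varphi\colon A\to A$ onto $K$ is a morphism of differential graded $(B,d)$-modules: it is a chain map because $s$ commutes with the differentials, and it is $B$-linear because $\varphi s=\mathrm{Id}_B$ and $s$ is multiplicative. The second ingredient is the morphism $l_n\colon(A\otimes\Lambda W_{n+1},\overline{D})\to B$ from Diagram (\ref{diag-rational}); being a \cdga\ morphism it is in particular a $(B,d)$-module morphism, and by construction it satisfies $l_nk_n=\varphi$ together with $l_n(\Lambda^+W_{n+1})=0$ (the latter since $l_n(W_{n+1})=0$ and $l_n$ is multiplicative).

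With these at hand I would set
$$r':=\rho\circ r+s\circ l_n\colon(A\otimes\Lambda W_{n+1},\overline{D})\longrightarrow A.$$
Being a sum of composites of $(B,d)$-module morphisms, $r'$ is automatically a $(B,d)$-module morphism. It is a retraction of $k_n$, since
$$r'k_n=\rho rk_n+sl_nk_n=\rho+s\varphi=(\mathrm{Id}_A-s\varphi)+s\varphi=\mathrm{Id}_A.$$
Finally, for $\omega\in\Lambda^+W_{n+1}$ one has $r'(\omega)=\rho r(\omega)+sl_n(\omega)=\rho r(\omega)\in K$, because $l_n(\omega)=0$ and $\rho$ takes values in $K$. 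Thus $r'$ witnesses $\mrelcat(\varphi)\leq n$, giving $\mrelcat(\varphi)\leq\msecat(\varphi)$ and hence the desired equality.

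I expect the only delicate point to be the insistence that every map in sight is a morphism of \emph{differential} graded modules. A naive fix --- projecting $r$ onto $K$ on $\Lambda^+W_{n+1}$ while leaving it unchanged on $k_n(A)$ --- fails, because $\overline{D}$ does not preserve the graded splitting $A\otimes 1\oplus A\otimes\Lambda^+W_{n+1}$ (indeed $\overline{D}\omega$ has a component in $K\subset A\otimes 1$), so such a map would not be a chain map. The role of the correction term $s\circ l_n$ is precisely to restore the retraction identity $r'k_n=\mathrm{Id}_A$ while contributing nothing on $\Lambda^+W_{n+1}$; since $\rho$ and $l_n$ are genuine dg-module morphisms, $r'$ inherits chain-map compatibility for free, so no explicit differential bookkeeping is required.
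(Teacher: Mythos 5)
Your proof is correct, and the map you construct is in fact the same one the paper uses: on $A\otimes 1$ your $r'=\rho\circ r+s\circ l_n$ gives $r'(a)=a$, and on $a\omega$ with $\omega\in\Lambda^+W_{n+1}$ it gives $r(a\omega)-s\varphi r(a\omega)$ (since $l_n(a\omega)=0$), which is precisely the paper's formula. Where you genuinely differ is in the verification that $r'$ is a morphism of differential $(B,d)$-modules. The paper defines $r'$ piecewise and checks compatibility with the differentials by an explicit computation: it writes $\overline{D}\omega=\alpha+\sum_i a_i\psi_i$ with $\alpha\in K$ and $\psi_i\in\Lambda^+W_{n+1}$ and uses $\varphi r(a\alpha)=0$ to see that the correction term $s\varphi r$ does not spoil the chain-map condition. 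You instead express $r'$ globally as a sum of composites of dg-$(B,d)$-module morphisms --- the projection $\rho=\mathrm{Id}_A-s\varphi$, the given retraction $r$, the section $s$, and the morphism $l_n$ of Diagram (\ref{diag-rational}) --- so that $B$-linearity and commutation with differentials come for free. This is a cleaner packaging of the same correction; the price is that you must invoke the properties $l_nk_n=\varphi$ and $l_n(W_{n+1})=0$, which hold for the choice of model with $\theta(W_{n+1})=0$ made just before Proposition \ref{relcat0}, whereas the paper's hands-on computation needs only the shape of $\overline{D}\omega$. Your closing observation about why a naive projection on the $\Lambda^+W_{n+1}$ part fails to be a chain map is exactly the difficulty the paper's explicit differential bookkeeping is designed to handle, and your $s\circ l_n$ term resolves it correctly.
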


\begin{proof} We just have to prove that $\mrelcat(\varphi)\leq \msecat(\varphi)$.
Suppose there is a $(B,d)$-module morphism
$r\colon(A\otimes\Lambda W_{n+1},\overline{D})\to A$ such that
$r(a)=a$ for all $a\in A$. Define $r'\colon(A\otimes\Lambda
W_{n+1},\overline{D})\to A$ as $r'(a):=a$ and
$r'(a\omega):=r(a\omega)-s\varphi r(a\omega)$ for
$\omega\in\Lambda^+W_{n+1}$. It is obvious that $r'(\Lambda^+
W_{n+1})\subset K$ and that $r'(s(b)a\omega)=s(b)r'(a\omega)$. We
shall now see that $r'$ commutes with differentials. Write
$\overline{D}\omega= \alpha+\sum_i a_i\psi_i$, with $\alpha\in K$,
$\{a_i\}_i\subset A$ and $\{\psi_i\}_i\subset \Lambda^+W$. Since
$\alpha\in K$ we have $\varphi r(a\alpha)=0$. Therefore
\[r'(\overline{D}(a\omega))=r'((da)\omega)+(-1)^{|a|}
r'(a(\overline{D}\omega))=(r((da)\omega)-s\varphi
r((da)\omega)+\]\[(-1)^{|a|}\left(r(a\alpha)+r\left(a\sum_ia_i\psi_i\right)-s\varphi
r(a\alpha)-s\varphi
r\left(a\sum_ia_i\psi_i\right)\right)=\]\[r(\overline{D}(a\omega))-s\varphi
r(\overline{D}(a\omega))=\overline{D}(r'(a\omega)).\] This implies
that $\mrelcat(\varphi)\leq n$.
\end{proof}

\end{document}